\newtheorem{theorem}{Theorem}
\numberwithin{theorem}{section}
\newtheorem{proposition}[theorem]{Proposition}
\newtheorem{lemma}[theorem]{Lemma}
\newtheorem{corollary}[theorem]{Corollary}
\newtheorem{remark}[theorem]{Remark}
\newtheorem{example}[theorem]{Example}
\newcommand{\RR}{\mathbb{R}}
\newcommand{\QQ}{\mathbb{Q}}
\newcommand{\PP}{\mathbb{P}}
\newcommand{\CC}{\mathbb{C}}
 \date{}
\title{\textbf{Duality of Multiple Root Loci}}
\author{Hwangrae Lee and Bernd Sturmfels}
\begin{document}

\maketitle

\begin{abstract} \noindent
The multiple root loci among univariate polynomials of degree~$n$ are indexed 
by partitions of $n$. We study these loci and their conormal varieties. 
The projectively dual varieties are joins of such loci
where the partitions are hooks. Our emphasis 
lies on equations and parametrizations that are useful for Euclidean distance optimization.
We compute the ED degrees for hooks.
Among the dual hypersurfaces are those that
demarcate the set of binary forms whose real rank equals the generic complex rank.
\end{abstract}

\section{Introduction}

Univariate polynomials of degree $n$ correspond to points in a projective space $\PP^n$.
The {\em multiple root locus} $\Delta_\lambda$ associated with a partition $\lambda = 
(\lambda_1,\ldots,\lambda_d)$ is the subvariety of $\PP^n$ given by
 polynomials that have $d$
distinct roots with multiplicities $\lambda_1,\ldots,\lambda_d$.
The dimension of $\Delta_\lambda$ is $d$.
The singular locus of $\Delta_\lambda$ is the union of
 certain codimension one subloci $\Delta_\mu$, as described in \cite[\S 3]{Kur}.
 The degree of $\Delta_\lambda$ was determined by
Hilbert in \cite{Hil}. He showed that
\begin{equation}
\label{eq:hilbert}
 {\rm deg}( \Delta_\lambda) \,\, = \,\,
\frac{d !} {m_1 ! m_2 ! \cdots m_p !}
\cdot \lambda_1 \lambda_2 \cdots \lambda_d,
\end{equation}
where $m_j$ denotes the number of parts $\lambda_i$ in the partition $\lambda$
that are equal to the  integer $j$.

The multiple root loci $\Delta_\lambda$ 
have been studied in a wide range of contexts and
under various different names:
coincident root loci \cite{Chi1, FNR, Kur},
 pejorative manifolds \cite{McN, ZZ1},
strata of the discriminant \cite{Hil, Kat, Na}, 
$\lambda$-Chow varieties \cite{Oed},
factorization manifolds \cite[Definition 5.2.4]{ZZ2}, etc.
Our motivation arose from the desire to understand the geometry of a  model selection problem
considered at the  interface of  symbolic computation \cite{KMYZ} and
numerical analysis \cite{ZZ1}:
given a univariate polynomial $h$, identify a 
low-dimensional $\Delta_\lambda$ such that
$h$ is close to~$\Delta_\lambda$.

Finding a point in $\Delta_\lambda$ that is closest to a 
given $h$ is a problem of polynomial optimization \cite{BPT}.
We here characterize the
geometric duality that underlies this optimization problem,
in the sense of \cite[Chapter 5]{BPT}.
The  key player is the dual variety $(\Delta_\lambda)^\vee$.
This variety lives in the dual projective space $\PP^n$ and it parametrizes
all hyperplanes that are tangent to $\Delta_\lambda$.

The duals to multiple root loci were studied
by Oeding in \cite{Oed}. He shows that
$(\Delta_\lambda)^\vee$ is a hypersurface
if and only if $m_1 =0$, i.e.~all parts of $\lambda$ satisfy $\lambda_i \geq 2$.
In \cite[Theorem 5.3]{Oed} an explicit formula
is given for the degree of the polynomial 
that cuts out this hypersurface:
\begin{equation}
\label{eq:oeding}
{\rm deg}\bigl((\Delta_\lambda)^\vee \bigr) \,\, = \,\,
 \frac{(d+1) !}{m_2 !  \cdots m_p!}\cdot (\lambda_1-1)(\lambda_2 - 1) \cdots (\lambda_d-1).
\end{equation}
For the application to optimization  in \cite[Theorem 5.23]{BPT},
this is the number of complex critical points one encounters when minimizing a 
general linear function over an affine chart of $\Delta_\lambda$.
For instance, consider $n  = 5$ and $\lambda  = (3,2)$.
Following Example \ref{ex:apple} and \cite[\S 3]{CO},
the polynomial for  $(\Delta_\lambda)^\vee $ is the 
{\em apple invariant} of degree $12$.
So, optimizing a linear function over quintics
with a triple root and a double root leads to solving an equation of degree $12$.

The present paper is a continuation of the studies by Hilbert \cite{Hil}
and Oeding \cite{Oed}.  It is organized as follows.
In Section 2 we set up notation and basics.
In Theorem \ref{main2}  we parametrize
the conormal variety ${\rm Con}_\lambda$
 that links $\Delta_\lambda$ and $(\Delta_\lambda)^\vee$.
Theorem \ref{thm:neun} offers a parametrization for
the projective duals of multiple root loci.
 These results are derived from the
   apolarity theory for binary forms,
as described in the book by Iarrobino and Kanev \cite{IK}.

In Section 3 we study the multidegree of
the conormal variety ${\rm Con}_\lambda$, and we summarize what is known about the
ideals of $\Delta_\lambda$ and $(\Delta_\lambda)^\vee$.
 Table~\ref{tab:eins} offers a census of  small instances up to $n = 7$.
We also examine duality for hooks, and inclusions among the dual strata.

Section 4 offers an application to tensor rank over $\RR$.
Theorem \ref{thm:realrank} characterizes the algebraic boundary
of the set of binary forms whose real rank equals the generic complex rank.
When $n$ is even then this involves the use of
Chow forms \cite{GKZ} and Hurwitz forms \cite{Stu}.

In Section 5 we turn to Euclidean distance (ED) optimization.
Theorem \ref{thm:itisn} determines the ED degree of $\Delta_\lambda$
for hook shapes $\lambda$, both for
invariant coordinates and for generic coordinates.
This generalizes known results  
for the rational normal curve $\Delta_{(n)}$, in particular the
tight connection to eigenvectors of
symmetric tensors; cf.~\cite[Ex.~5.2 and Cor.~8.7]{DHOST}.

Section 6 discusses implications for the ED optimization
problem (\ref{eq:minimize}).
The focus lies on locating all the critical points
in $\Delta_\lambda$ and in $(\Delta_\lambda)^\vee$,
and on certifying the global optimum.
We work this out for several examples,
including one from tensor decomposition over~$\RR$.

\section{Duality for Binary Forms}

We fix a field $K$ of characteristic zero and we represent
univariate polynomials by binary forms.
Let $V = K[x,y]_n$ denote the space of binary forms of degree $n$
and $V^\vee = K[u,v]_n$ its dual vector space. For $f \in V$ and $
g \in V^\vee$, the pairing is defined by using partial derivatives:
\begin{equation}
\label{eq:innerprod1}
 \langle f,g \rangle\, \,\, = \,\, \,
\frac{1}{n!} \cdot g\left(\frac{\partial}{\partial x},\frac{\partial}{\partial y} \right)
 f(x,y) \,\,\,\, = \,\,\,\,
 \frac{1}{n!} \cdot
 f \left(\frac{\partial}{\partial u},\frac{\partial}{\partial v} \right)  g(u,v).
\end{equation}
This is the scalar in $K$ obtained by interpreting one polynomial as
a differential operator and applying it to the other polynomial. 
Introducing coordinates on $V$ and $V^\vee$, we have:
\begin{equation}
\label{eq:innerprod2}
\hbox{If $\,f=\sum_{i=0}^n \binom{n}{i}a_i x^i y^{n-i}\,$ and
 $\,g = \sum_{i=0}^n \binom{n}{i}b_i u^i v^{n-i}\,\,$
then $\,\,\langle f, g\rangle \,= \, \sum_{i=0}^n \binom{n}{i} a_i b_i$.}
\end{equation}
We regard $f$ and $g$ as elements in the projective spaces
$\PP(V)$ and $\PP(V^\vee)$. Both of these spaces
are identified with $\PP^n$ using homogeneous coordinates
$(a_0:\cdots:a_n)$ and $(b_0:\cdots:b_n)$.

A partition $\lambda$ of $n$ is represented
alternatively as a list of integers $(\lambda_1,\ldots,\lambda_d)$
satisfying $\lambda_1 \geq \cdots \geq \lambda_d \geq 1$
and $\sum_{i=1}^d \lambda_i = n$, or as a multiset
$\{1^{m_1}, \dots, p^{m_p}\}$ satisfying $\sum_{j=1}^p j m_j = n$.
So, $m_j  = \# \{i : \lambda_i = j\}$, and
$d$ is the number of parts of $\lambda$,
and $p$ is the largest part of $\lambda$.

The multiple root variety $\Delta_\lambda \subset \PP^n$  comprises
all binary forms $f$ of degree $n$ that have 
$m_j$ roots of multiplicity $j$.
Equivalently, writing $\ell_i$ for linear forms,
$\Delta_\lambda$  is the image of
$$ (\PP^1)^d \,\rightarrow\, \PP^n ,\,\,
 (\ell_1,\ell_2 ,\ldots, \ell_d) \,\mapsto\,
 f = \ell_1^{\lambda_1}  \ell_2^{\lambda_2} \cdots \ell_d^{\lambda_d}. $$
The variety $\Delta_\lambda$ has dimension $ d$ and degree as in (\ref{eq:hilbert}). 
Among its smooth points are those given
by $d$ distinct linear forms $\ell_i$.
We determine the tangent space of $\Delta_\lambda$
at such a point.

\begin{lemma}
\label{lem:eins}
The tangent space of $\Delta_\lambda$ at a general smooth point
 $f = \prod_{i=1}^d \ell_i^{\lambda_i}$ equals
$$ T_f \Delta_{\lambda} \,\,= \,\, \left\{ 
  h(x,y) \cdot \prod_{i=1}^d \ell_i^{\lambda_i-1}(x,y)
  \,\, \bigg| \, \,h \in \PP( K[x,y]_d) 
  \right\} \,\,\, \simeq \,\,\, \PP^d . $$
\end{lemma}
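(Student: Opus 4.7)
The plan is to obtain $T_f \Delta_\lambda$ as the image of the differential of the parametrization $\phi : (\PP^1)^d \to \PP^n$, $(\ell_1, \ldots, \ell_d) \mapsto \prod_{i=1}^d \ell_i^{\lambda_i}$, evaluated at the given smooth point. Working in an affine chart on $(\PP^1)^d$ around $(\ell_1, \ldots, \ell_d)$, I perturb each $\ell_i$ to $\ell_i + t m_i$ with arbitrary $m_i \in K[x,y]_1$ and expand via the Leibniz rule, reading off the coefficient of $t$.

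This yields tangent vectors to the affine cone over $\Delta_\lambda$ at $f$ of the form $\sum_{j=1}^d \lambda_j\, m_j\, \ell_j^{\lambda_j - 1} \prod_{i \neq j} \ell_i^{\lambda_i}$. The key algebraic step is to factor $\prod_{i=1}^d \ell_i^{\lambda_i - 1}$ out of every summand, producing
$$\prod_{i=1}^d \ell_i^{\lambda_i - 1} \,\cdot\, \sum_{j=1}^d \lambda_j \,m_j \prod_{i \neq j} \ell_i.$$
The second factor lies in $K[x,y]_d$, so projectively $T_f \Delta_\lambda$ is contained in the linear subspace $\bigl\{ h \prod_i \ell_i^{\lambda_i - 1} : h \in \PP(K[x,y]_d) \bigr\} \simeq \PP^d$.

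Equality then follows from a dimension count: since $f$ is a smooth point of the $d$-dimensional variety $\Delta_\lambda$, the tangent space has projective dimension exactly $d$ and therefore fills the containing $\PP^d$. If one prefers to avoid invoking the smoothness hypothesis, the same conclusion can be reached by showing directly that the linear map $K[x,y]_1^d \to K[x,y]_d$ sending $(m_1, \ldots, m_d)$ to $\sum_j \lambda_j m_j \prod_{i \neq j} \ell_i$ is surjective: since the $\ell_i$ are pairwise distinct, the $d$ polynomials $\prod_{i \neq j}\ell_i$ form a Lagrange-type basis of $K[x,y]_{d-1}$, and multiplying this basis by $x$ and by $y$ spans all of $K[x,y]_d$. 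I expect this last surjectivity check to be the only mildly nontrivial point; the remainder is a routine Leibniz computation followed by a clean factorization.
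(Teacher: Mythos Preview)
Your proposal is correct and follows essentially the same approach as the paper: compute the image of the differential of the parametrization (the paper does this via $\partial/\partial a_i$ and $\partial/\partial b_i$ where $\ell_i = a_i x + b_i y$, which is your $m_i \in \{x,y\}$), factor out $\prod_i \ell_i^{\lambda_i-1}$, and then establish surjectivity onto $K[x,y]_d$ using the Lagrange-basis argument for the $\prod_{i\neq j}\ell_i$. The paper proves surjectivity directly rather than via the dimension count, but your second option is verbatim the paper's argument.
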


\begin{proof}
Write $\ell_i = a_ix + b_iy$ for some indeterminate $(a_i:b_i) \in \PP^1$.
The tangent space $T_f \Delta_{\lambda}$ is spanned by the binary forms
$\,\frac{\partial}{\partial a_i} f =x f/\ell_i \,$ 
and $\,\frac{\partial}{\partial b_i}f = y f/\ell_i\,$
for $i=1,2,\ldots,d$. These generators have the required form
if we take $h(x,y)$ to be $\, x {\cdot} (\prod_{j=1}^d \ell_j)/\ell_i\,$
or $\, y {\cdot} (\prod_{j=1}^d \ell_j)/\ell_i$.
For the converse we note that
$K[x,y]_{d-1}$ is spanned by 
$\big\{ (\prod_{j=1}^d \ell_j)/\ell_i\,| \, i = 1,\ldots,d \bigr\}$.
Indeed, this is a Lagrange basis, since the $\ell_i$
are pairwise linearly independent. Hence, by multiplying these
polynomials with $x$ and $y$, we obtain a spanning
set for the vector space $K[x,y]_d$.
\end{proof}

\begin{lemma}
\label{lem:zwei}
Let $f$ be as above and $g \in \PP(V^\vee )$. Then $\,g \perp T_f \Delta_{\lambda}$ if and only if 
\begin{equation}
\label{eq:annihilates}
\left[\prod_{i=1}^d \ell_i^{\lambda_i-1}\bigl(\frac{\partial}{\partial u},
\frac{\partial}{\partial v}\bigr)\right] \,\,\, {\rm annihilates} \,\,\, g(u,v).
\end{equation}
\end{lemma}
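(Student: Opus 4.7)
The plan is to combine Lemma \ref{lem:eins} with the differential-operator description of the pairing in (\ref{eq:innerprod1}), and then invoke non-degeneracy of the apolarity pairing on binary forms of degree $d$.

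First I would unfold the perpendicularity condition. By Lemma \ref{lem:eins}, a general element of $T_f\Delta_\lambda$ has the form $h(x,y)\cdot\prod_{i=1}^d \ell_i^{\lambda_i-1}(x,y)$ for some $h\in K[x,y]_d$. Hence $g\perp T_f\Delta_\lambda$ means
\[
\Big\langle\, h\cdot\prod_{i=1}^d \ell_i^{\lambda_i-1}\,,\,g\,\Big\rangle \,=\, 0 \qquad \text{for every } h\in K[x,y]_d.
\]

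Next I would rewrite the left-hand side using (\ref{eq:innerprod1}) and the fact that polynomial differential operators with constant coefficients commute and compose multiplicatively. Writing $\partial := (\partial/\partial u,\partial/\partial v)$, we have
\[
\Big(h\cdot \prod_{i=1}^d \ell_i^{\lambda_i-1}\Big)(\partial) \,=\, h(\partial)\,\circ\,\Big[\prod_{i=1}^d \ell_i^{\lambda_i-1}\Big](\partial),
\]
so if we set
\[
g'(u,v) \,:=\, \Big[\prod_{i=1}^d \ell_i^{\lambda_i-1}\Big]\!\bigl(\partial/\partial u,\partial/\partial v\bigr)\,g(u,v),
\]
then $g'\in K[u,v]_d$ (it is what remains after an operator of order $n-d$ is applied to a form of degree $n$), and the perpendicularity condition becomes
\[
\tfrac{1}{n!}\, h(\partial)\, g'(u,v) \,=\, 0 \qquad \text{for every } h\in K[x,y]_d.
\]

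Finally I would appeal to the non-degeneracy of the apolarity pairing in degree $d$: by the degree-$d$ version of (\ref{eq:innerprod2}), the map $h\mapsto h(\partial)g'$ is, up to the scalar $d!$, the pairing $\langle h,g'\rangle$ on $K[x,y]_d\times K[u,v]_d$, which is perfect. Thus $h(\partial)g'=0$ for all $h\in K[x,y]_d$ if and only if $g'=0$, which is precisely the annihilation statement (\ref{eq:annihilates}).

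The only subtlety I anticipate is bookkeeping around the factorial normalizations and verifying that composing the differential operators really corresponds to multiplying the underlying polynomials — this is a formal fact about polynomial algebras of constant-coefficient operators, but it needs to be stated cleanly so that the reduction of a degree-$n$ apolarity condition to a degree-$d$ apolarity condition is transparent. Everything else is routine manipulation.
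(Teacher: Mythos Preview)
Your proposal is correct and follows essentially the same argument as the paper: both define the degree-$d$ form obtained by applying $\prod_i \ell_i^{\lambda_i-1}(\partial)$ to $g$, factor the operator as $h(\partial)\circ\prod_i \ell_i^{\lambda_i-1}(\partial)$, and then use non-degeneracy of the degree-$d$ apolarity pairing together with Lemma~\ref{lem:eins}. The only difference is cosmetic---you run the chain of equivalences in the opposite order and are more explicit about the factorial constants.
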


\begin{proof}
Let $\tilde g(u,v)$ be the binary form of degree $d$
obtained from $g(u,v)$ by applying the operator on the left
of (\ref{eq:annihilates}). Then
$\tilde g$ is zero if and only if $\tilde g(u,v)$ is
annihilated by $h(\frac{\partial}{\partial u}, \frac{\partial}{\partial v})$
for all $h \in K[x,y]_d$ if and only if $g(u,v)$ is annihilated by
$\bigl(h \cdot \prod_{i=1}^d \ell_i^{\lambda_i-1}\bigr)(\frac{\partial}{\partial u}, 
\frac{\partial}{\partial v})$ for all $h \in K[x,y]_d$. By Lemma \ref{lem:eins},
this means that $g(u,v)$ is orthogonal to 
the  space $T_f \Delta_\lambda$.
\end{proof}

The {\em conormal variety} of 
$\Delta_\lambda$, here denoted ${\rm Con}_\lambda$,
is the Zariski closure 
of the set
$$ \bigl\{ (f,g) \,\, \big| \,\,
f \,\, \hbox{is a smooth point of} \,\,\Delta_\lambda\,\,
{\rm and} \,\,g \perp T_f \Delta_\lambda \bigr\} 
\quad
\hbox{in} \quad \PP(V) \times \PP(V^\vee) \, = \,\PP^n \times \PP^n.
$$
General results on projective duality ensure that
${\rm Con}_\lambda$ is an irreducible
variety of dimension $n{-}1$ in $\PP^n \times \PP^n$.
The {\em dual variety} $(\Delta_\lambda)^\vee$ is
the image of the conormal variety ${\rm Con}_\lambda$ under
 projection onto the second factor  $\PP(V^\vee)$.
This is an irreducible variety of dimension $\leq n-1$.
The Biduality Theorem  \cite{GKZ} states that
 the conormal variety of $(\Delta_\lambda)^\vee$
coincides with ${\rm Con}_\lambda$, and hence
$((\Delta_\lambda)^\vee)^\vee = \Delta_\lambda$.
Lemma~\ref{lem:zwei} implies the following description of the dual variety.

\begin{corollary}\label{main1}
The points on the variety $(\Delta_\lambda)^\vee$
are binary forms $g(u,v)$ that are
annihilated by some order $n-d$ operator of the form
$\,\prod_{i=1}^d \ell_i^{\lambda_i-1}\bigl(\frac{\partial}{\partial u},
\frac{\partial}{\partial v}\bigr)\,$ where
$\,\ell_1,\ldots,\ell_d \in K[x,y]_1$.
\end{corollary}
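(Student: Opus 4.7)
The plan is to deduce the description directly from Lemma~\ref{lem:zwei} by a closure/irreducibility argument. I would introduce the incidence variety
\[
W \,:=\, \bigl\{\bigl((\ell_1,\ldots,\ell_d), g\bigr) \in (\PP^1)^d \times \PP(V^\vee) \,:\, \textstyle\prod_{i=1}^d \ell_i^{\lambda_i - 1}\bigl(\frac{\partial}{\partial u},\frac{\partial}{\partial v}\bigr) \cdot g \,=\, 0 \bigr\}
\]
and write $\pi\colon W \to \PP(V^\vee)$ for the projection to the second factor. The set of binary forms described in the corollary is precisely $\pi(W)$, and the goal is to prove $\pi(W) = (\Delta_\lambda)^\vee$.

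For the inclusion $(\Delta_\lambda)^\vee \subseteq \pi(W)$: since $W$ is projective, $\pi$ is proper and $\pi(W)$ is closed in $\PP(V^\vee)$, so it suffices to show that $\pi(W)$ contains a dense subset of $(\Delta_\lambda)^\vee$. But by Lemma~\ref{lem:zwei}, every pair $(f,g)$ with $f = \prod \ell_i^{\lambda_i}$ a general smooth point and $g \perp T_f \Delta_\lambda$ lifts to a pair $\bigl((\ell_i), g\bigr) \in W$, and such $g$ form a dense subset of $(\Delta_\lambda)^\vee$ by construction of $\mathrm{Con}_\lambda$ and its projection. For the reverse inclusion $\pi(W) \subseteq (\Delta_\lambda)^\vee$: let $W^\circ \subset W$ be the open subvariety where the $\ell_i$ are pairwise distinct. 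For each point in $W^\circ$, the product $f = \prod \ell_i^{\lambda_i}$ is a smooth point of $\Delta_\lambda$, so Lemma~\ref{lem:zwei} forces $g \perp T_f \Delta_\lambda$, whence $\pi(W^\circ) \subseteq (\Delta_\lambda)^\vee$. Provided $W$ is irreducible, $W^\circ$ is dense in $W$, the image $\pi(W^\circ)$ is then dense in $\pi(W)$, and closedness of $(\Delta_\lambda)^\vee$ gives $\pi(W) = \overline{\pi(W^\circ)} \subseteq (\Delta_\lambda)^\vee$.

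The main obstacle is the irreducibility of $W$, which in turn reduces to constancy of the fiber dimension of the projection $W \to (\PP^1)^d$. Concretely, I would need to verify that for every $(\ell_1, \ldots, \ell_d) \in (\PP^1)^d$---including degenerate ones where several $\ell_i$ coincide---the differential operator $\prod_i \ell_i^{\lambda_i - 1}(\frac{\partial}{\partial u}, \frac{\partial}{\partial v}) : V^\vee \to K[u,v]_d$ is surjective, so that its kernel is a linear space of constant dimension $n-d$. This is a standard apolarity fact for binary forms (cf.~\cite{IK}) and can be checked by an $\mathrm{SL}_2$ change of coordinates in $(x,y)$, reducing $\prod \ell_i^{\lambda_i - 1}$ to a convenient normal form (for instance a monomial when all $\ell_i$ coincide), where surjectivity is visible on monomials. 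Granted this, $W$ is a $\PP^{n-d-1}$-bundle over the irreducible base $(\PP^1)^d$, hence irreducible, and the closure argument closes the loop.
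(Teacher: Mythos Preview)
Your argument is correct and is the natural way to make rigorous what the paper leaves implicit: the paper offers no proof beyond the line ``Lemma~\ref{lem:zwei} implies the following description,'' tacitly passing from the dense open locus of distinct $\ell_i$ to the closure. Your incidence-variety setup and the verification that $W\to(\PP^1)^d$ is a $\PP^{n-d-1}$-bundle (via surjectivity of the apolarity operator $p(\partial_u,\partial_v)\colon K[u,v]_n\to K[u,v]_d$ for every nonzero $p$ of degree $n-d$) supply exactly the irreducibility step needed to conclude that allowing coincident $\ell_i$ does not enlarge the image.
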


At this point, let us pause to illustrate the concepts seen above with a small example.

\begin{example} \rm
Let $n = 3$ and take $\lambda = (3)$,
the partition with a single part. 
The conormal variety ${\rm Con}_{(3)}$
is the surface in $\PP^2 \times \PP^2$
whose defining homogeneous prime ideal equals
\begin{equation}
\label{eq:twistedcubic} \!\!\!
\begin{matrix}
\bigl\langle \,a_2^2-a_1 a_3, a_1 a_2-a_0 a_3, a_1^2-a_0 a_2 \,,\,
3 b_1^2 b_2^2 - 4 b_0 b_2^3 - 4 b_1^3 b_3+6 b_0 b_1 b_2 b_3-b_0^2 b_3^2
\qquad \qquad \\ \qquad
a_0 b_0+2 a_1 b_1+a_2 b_2,\,
a_0 b_1+2 a_1 b_2+a_2 b_3,\, a_1 b_0+2 a_2 b_1+a_3 b_2,\,
a_1 b_1+2 a_2 b_2+a_3 b_3 \, \bigr\rangle.
 \end{matrix}
 \end{equation}
 The multiple root locus $\Delta_{(3)}$  is the twisted cubic
 curve in $\PP^3$, consisting of cubes of linear forms,
and defined by the first three quadrics in (\ref{eq:twistedcubic}).
Its dual is $(\Delta_{(3)})^\vee = \Delta_{(2,1)}$. It consists
of binary cubics with a double root, and it is the discriminant surface
in $\PP^3$ defined by the quartic in (\ref{eq:twistedcubic}).
We have ${\rm Con}_{(3)} ={\rm Con}_{(2,1)}$,
after swapping the $a$-variables with the $b$-variables.

We illustrate Corollary \ref{main1} for the point
$\bigl((1:1:1:1), (1:2:-5:8)\bigr)$ in ${\rm Con}_{(3)}$.
This  represents the pair $(f,g)$ where
$f = (x+y)^3$ and $g = (u-v)^2 (u+8v)$.
Then $g(u,v)$ lies in $(\Delta_{(3)})^\vee$ because
$(\frac{\partial}{\partial u} + \frac{\partial}{\partial v})^2 g(u,v) = 0$,
and $f(x,y)$ lies in $ (\Delta_{(2,1)})^\vee$ because
$(\frac{\partial}{\partial x} - \frac{\partial}{\partial y}) f(x,y) = 0$.
Note that the degree formulas (\ref{eq:hilbert}) and (\ref{eq:oeding})
evaluate to $3$ and $4$ for $\lambda = (3)$.
\hfill $\diamondsuit$
\end{example}

The main result in this section is
a parametric representation of the conormal variety.

\begin{theorem}\label{main2}
The conormal variety ${\rm Con}_\lambda$ is the set of pairs
$(f,g) \in \PP^n \times \PP^n$ of the form
\begin{equation}
\label{eq:f_and_g} 
 f(x,y) \,=\, \prod_{i=1}^d (t_i x - s_i y)^{\lambda_i}  \quad \hbox{and} \quad
g(u,v) \, = \sum_{i=1 \atop \lambda_i \neq 1}^d (s_i u + t_i v)^{n-\lambda_i+2} \cdot g_i(u,v), 
\end{equation}
where $(s_i:t_i)$ runs over $\PP^1$ and $g_i$ runs over binary forms of degree $\lambda_i-2$.
This parametrization is a finite-to-one map
$\,  (\PP^1)^{m_1} \times \mathcal{J}_\lambda \dashrightarrow 
{\rm Con}_\lambda \subset \PP^n \times \PP^n \,$
whose degree is $m_1 ! m_2 ! \cdots m_p !$.
\end{theorem}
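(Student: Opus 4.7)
My plan is to identify the conormal fiber over a generic $f$ with the kernel of a differential operator, decompose that kernel via apolarity, and then count preimages of the parametrization.

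The $f$-coordinate of (\ref{eq:f_and_g}) is automatic from the definition of $\Delta_\lambda$ as the image of $(\ell_1,\ldots,\ell_d)\mapsto\prod_i\ell_i^{\lambda_i}$; I set $\ell_i=t_ix-s_iy$. For a generic choice of pairwise distinct $\ell_i$, the form $f$ is a smooth point of $\Delta_\lambda$, and Lemma~\ref{lem:zwei} identifies the conormal fiber over $f$ with $\ker D_f$, where
\[ D_f \,=\, \prod_{i=1}^{d}\ell_i^{\lambda_i-1}(\partial_u,\partial_v) \]
acts on $K[u,v]_n$. Note that indices with $\lambda_i=1$ contribute trivial factors to $D_f$, which is why the corresponding $\ell_i$ appear as free parameters $(\PP^1)^{m_1}$ with no bearing on $g$. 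The task therefore reduces to identifying $\ker D_f$ with the span of the building blocks $B_i:=(s_iu+t_iv)^{n-\lambda_i+2}\,K[u,v]_{\lambda_i-2}$ for $\lambda_i\ge 2$, which is precisely the $g$-part of (\ref{eq:f_and_g}).

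Containment $B_i\subseteq \ker D_f$ I would verify by a Leibniz computation anchored on the identity $\ell_i(\partial_u,\partial_v)(s_iu+t_iv)=t_is_i-s_it_i=0$, which iterates to $\ell_i^k(\partial)(s_iu+t_iv)^m=0$ for every $k\ge 1$. Expanding $\ell_i^{\lambda_i-1}(\partial)\bigl[(s_iu+t_iv)^{n-\lambda_i+2}g_i\bigr]$ by Leibniz, each summand either sends at least one derivative into the linear-form power (giving zero) or puts all $\lambda_i-1$ derivatives onto $g_i$ of degree $\lambda_i-2$ (also zero). For the reverse inclusion I would count dimensions via the pairing (\ref{eq:innerprod1}): this pairing is perfect, and $\ker D_f$ is the annihilator of $\prod_i\ell_i^{\lambda_i-1}\cdot K[x,y]_d$, which has dimension $d+1$ since multiplication by the nonzero form $\prod_i\ell_i^{\lambda_i-1}$ is injective. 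Hence $\dim \ker D_f = n-d$, while $\sum_{\lambda_i\ge 2}\dim B_i=\sum_{\lambda_i\ge 2}(\lambda_i-1)=n-d$, so everything comes down to showing the sum $\sum B_i$ is direct. I would establish this via partial fractions: any vanishing relation $\sum_i(s_iu+t_iv)^{n-\lambda_i+2}g_i=0$ with the $\ell_i$ distinct can be localized at each line $(s_iu+t_iv)=0$ to force $g_i=0$; a cleaner alternative is to invoke the Sylvester-type apolarity theorem for binary forms in \cite{IK}, which provides exactly the decomposition $\ker D_f=\bigoplus_i B_i$.

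For the finite-to-one degree of the parametrization, a general point $(f,g)\in{\rm Con}_\lambda$ recovers from $f$ the unordered multiset of tagged roots $\{(\ell_i,\lambda_i)\}$, and the parametrization requires ordering the $\ell_i$ compatibly with the partition. The number of such orderings is $m_1!m_2!\cdots m_p!$, and for each of them the direct-sum decomposition of $\ker D_f$ uniquely recovers the summands $g_i$ from $g$. So each generic fiber contains exactly $m_1!m_2!\cdots m_p!$ preimages. The main technical obstacle is the direct-sum claim, since for three or more $B_i$'s pairwise trivial intersection is not enough; the partial-fractions argument must be run carefully with the large exponents $n-\lambda_i+2$, or else one defers entirely to the Iarrobino--Kanev machinery.
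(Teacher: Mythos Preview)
Your proposal is correct and follows essentially the same route as the paper: both pass through Lemma~\ref{lem:zwei} to identify the conormal fiber with $\ker D_f$, and both ultimately rest on the apolarity result \cite[Lemma~1.31]{IK} for the generalized additive decomposition (the paper cites it directly, while you unpack it into an explicit containment plus a dimension count, correctly flagging the direct-sum claim as the technical crux and offering \cite{IK} as the fallback). One minor difference worth noting: for the fiber count, the paper sidesteps the direct-sum issue altogether by observing that the parametrization is finite-to-one on dimension grounds, so the linear solution space in the $g_i$'s over each of the $m_1!\cdots m_p!$ root-orderings, being nonempty, must be a single point.
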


In order for this theorem to make sense, we need to define
the parameter space. We set
$$ \mathcal{J}_\lambda \,\, = \,\,
{\rm Join} \bigl( \PP^1 \times \PP^{\lambda_1-2}, \,
\PP^1 \times \PP^{\lambda_2-2},\,\ldots,\,
 \PP^1 \times \PP^{\lambda_d-2}\bigr). $$
This is the {\em free join} (or {\em abstract join}) of
the $d-m_1$ varieties in the argument.
Here $m_1$ is subtracted from $d$ because 
the $j$-th argument is empty and gets deleted when $\lambda_j =1$.
Note that $\mathcal{J}_\lambda$ is the projective toric variety whose associated
polytope is the free join of
the product of simplices $\sigma_1 \times \sigma_{{\lambda_j}-2}$
for $j=1,2,\ldots,d$. The dimension of this join equals
$$ {\rm dim}(\mathcal{J}_\lambda) \,\,=\,\, (d-m_1-1) \,+\,
 \sum_{i=1}^d {\rm max}(0,1+(\lambda_i-2))\,\, = \,\, n - m_1 - 1. $$
 The point of the toric variety $\mathcal{J}_\lambda$ is to
 ensure that (\ref{eq:f_and_g}) gives a well-defined rational map.
 We will see in   Example \ref{ex:notmorphism} that it is not a morphism.
 But that is not a problem since we can replace both
 the parameter space and image by their affine cones.  The formulas
   in (\ref{eq:f_and_g}) are homogeneous. We use them in the next section
   to carry out the computations for Table~\ref{tab:eins}.

By projection onto the second factor,
Theorem \ref{main2} immediately yields a parametrization 
$ (\PP^1)^{m_1} \times \mathcal{J}_\lambda \dashrightarrow  (\Delta_\lambda)^\vee \subset \PP^n \,$
of the dual variety. Indeed, $(\Delta_\lambda)^\vee$
consists of all polynomials $g(u,v)$ of the form in (\ref{eq:f_and_g}).
This can be rephrased in the language of projective geometry:

\begin{corollary}\label{main3}
The dual variety $(\Delta_{\lambda})^{\vee} $ 
indexed by $\lambda$ is the
 join of $d-m_1$ multiple root loci determined by hook shapes,
namely the loci $ \Delta_{\{1^{\lambda_i-2}\!,\,n-\lambda_i+2\}}$,
where $i=1,\ldots,d$ with $\lambda_i \geq 2$.
\end{corollary}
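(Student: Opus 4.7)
The plan is to deduce Corollary \ref{main3} directly from Theorem \ref{main2}. Projecting the parametrization (\ref{eq:f_and_g}) of ${\rm Con}_\lambda$ onto the second factor expresses an arbitrary point of $(\Delta_\lambda)^\vee$ as
$$g(u,v) \,=\, \sum_{i \,:\, \lambda_i \geq 2} (s_i u + t_i v)^{n-\lambda_i+2}\, g_i(u,v),$$
where each $g_i$ is a binary form of degree $\lambda_i - 2$. The strategy is to recognize each summand as a general point of a hook-shape multiple root locus and then to recognize the sum itself as a point on a projective join of those loci.

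For the first step, observe that the hook partition $\{1^{\lambda_i - 2},\, n - \lambda_i + 2\}$ consists of one part of size $n - \lambda_i + 2$ together with $\lambda_i - 2$ parts equal to $1$. By the parametrization of $\Delta_\mu$ recalled in Section 2, a general element of $\Delta_{\{1^{\lambda_i - 2},\, n-\lambda_i+2\}}$ has the form $\ell^{n-\lambda_i+2}\, m_1 m_2 \cdots m_{\lambda_i - 2}$ for linear forms $\ell, m_1, \ldots, m_{\lambda_i - 2}$. Since products of $\lambda_i - 2$ linear forms are Zariski dense in $\PP(K[u,v]_{\lambda_i - 2})$, the $i$-th summand above traces out a Zariski dense subset of $\Delta_{\{1^{\lambda_i - 2},\, n-\lambda_i+2\}}$ as $(s_i{:}t_i)$ varies in $\PP^1$ and $g_i$ varies over binary forms of degree $\lambda_i - 2$. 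The edge case $\lambda_i = 2$ degenerates to the rational normal curve $\Delta_{(n)}$ with $g_i$ a nonzero constant, consistent with the general pattern.

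For the second step, recall that the projective join of irreducible subvarieties $X_1, \ldots, X_k$ of a common $\PP^n$ is the Zariski closure of the set of classes $[v_1 + \cdots + v_k]$, where each $v_j$ lies in the affine cone over $X_j$. The parametrization of $(\Delta_\lambda)^\vee$ inherited from Theorem \ref{main2} is exactly of this sum-of-cones form, so its image is dense in the join $ {\rm Join}\bigl(\, \Delta_{\{1^{\lambda_i - 2},\, n-\lambda_i+2\}} \, : \, \lambda_i \geq 2 \,\bigr)$. Taking Zariski closure yields the identity claimed in the corollary. There is no real obstacle beyond bookkeeping: the argument is essentially a rewriting of (\ref{eq:f_and_g}) once each summand is matched with the correct hook locus. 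The only minor check is that the indexing set $\{i : \lambda_i \geq 2\}$ has cardinality $d - m_1$, which is immediate from the definition of $m_1$.
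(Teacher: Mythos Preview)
Your proposal is correct and follows essentially the same route as the paper: both deduce the corollary by projecting the parametrization of Theorem~\ref{main2} onto the second factor and reading off that each summand $(s_iu+t_iv)^{n-\lambda_i+2}g_i(u,v)$ parametrizes the hook locus $\Delta_{\{1^{\lambda_i-2},\,n-\lambda_i+2\}}$, so that their sum is the join. The paper treats this as an immediate rephrasing and gives no further argument; you have simply supplied the bookkeeping details.
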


Recall that a partition $\lambda$ is a {\em hook} if
at most one part is not $1$. This is special for us:

\begin{corollary} \label{cor:selfdual}
The dual variety $(\Delta_\lambda)^\vee$
is also a multiple root locus $\Delta_\mu$ if and only if the partition
 $\lambda$ is a hook. In that case
$ \lambda = \{1^{n-a},\,a\}$ and  $\mu =  \{1^{a-2},\,n-a+2\}$
for some $a \in \{ 2,\ldots,n\}$. In particular,
 $\Delta_\lambda$ is self-dual whenever
$n$ is even and
$\lambda = \{1^{n/2-1},\,n/2+1\}$.
\end{corollary}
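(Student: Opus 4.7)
The plan is to apply Corollary \ref{main3}, which expresses $(\Delta_\lambda)^\vee$ as a join of hook multiple root loci indexed by the parts $\lambda_i \geq 2$. The proof splits along the two implications.

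For the ``if'' direction, suppose $\lambda = \{1^{n-a}, a\}$ is a hook with $a \geq 2$. Then exactly one part of $\lambda$ is $\geq 2$, so the join in Corollary \ref{main3} consists of a single term, yielding $(\Delta_\lambda)^\vee = \Delta_{\{1^{a-2}, n-a+2\}}$. Setting $\mu = \{1^{a-2}, n-a+2\}$, the self-duality condition $\lambda = \mu$ forces $a = n - a + 2$, i.e.\ $a = n/2 + 1$, which requires $n$ to be even and yields $\lambda = \{1^{n/2-1}, n/2+1\}$.

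For the ``only if'' direction, assume $\lambda$ is not a hook, so at least two parts $\lambda_i, \lambda_j$ are $\geq 2$. By Corollary \ref{main3} and the parametrization of Theorem \ref{main2}, $(\Delta_\lambda)^\vee$ contains the two-fold join of $\Delta_{\{1^{\lambda_i-2}, n-\lambda_i+2\}}$ and $\Delta_{\{1^{\lambda_j-2}, n-\lambda_j+2\}}$ as the boundary stratum where the summands indexed by the remaining parts vanish. Since every hook locus contains every pure $n$-th power (by coalescing all its roots), both $u^n$ and $v^n$ lie in the respective hook loci, so $u^n + v^n \in (\Delta_\lambda)^\vee$. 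This polynomial has $n$ distinct roots, namely the $n$-th roots of $-1$, so it lies in no multiple root locus $\Delta_\mu$ with $\mu \neq (1^n)$; and $(\Delta_\lambda)^\vee \neq \PP^n = \Delta_{(1^n)}$ because $\dim(\Delta_\lambda)^\vee = n - m_1 - 1 < n$. Hence $(\Delta_\lambda)^\vee$ is not a multiple root locus.

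The main subtle point is justifying that $u^n + v^n$ genuinely lies in $(\Delta_\lambda)^\vee$, which combines the closedness of $(\Delta_\lambda)^\vee$ with the join-theoretic fact that pairwise subjoins sit inside the full join as boundary strata of the parameter space $\mathcal{J}_\lambda$. Once this witness is in place, the dimension comparison $n - m_1 - 1 < n$ rules out the remaining possibility $\mu = (1^n)$ and completes the proof.
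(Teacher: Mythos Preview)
Your proof is correct. The paper states this corollary without an explicit proof, treating it as an immediate consequence of Corollary~\ref{main3}, so your argument actually supplies more detail than the paper does.

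For the ``if'' direction, your reasoning is exactly what the paper intends: when $\lambda$ is a hook, the join in Corollary~\ref{main3} degenerates to a single term. For the ``only if'' direction, the paper leaves the argument to the reader; your approach via the explicit witness $u^n + v^n$ is clean and valid. One small remark: you invoke the dimension formula $n - m_1 - 1$ from Theorem~\ref{thm:neun}, which appears \emph{after} this corollary in the paper. You do not actually need it, though: the weaker statement $\dim(\Delta_\lambda)^\vee \leq n-1$, recorded just before Corollary~\ref{main1}, already suffices to rule out $\mu = (1^n)$. With that adjustment there is no forward reference.
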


We easily derive the theorem from results that
are well-known in commutative algebra.

\begin{proof}[Proof of Theorem~\ref{main2}]
We use apolarity as presented in the book
of Iarrobino and Kanev \cite{IK}. Let $f$ and $g$
be binary forms, possibly of different degrees.
We say that $f$ is {\em apolar} to $g$
if $g(x,y)$ is annihilated by the operator $f(\frac{\partial}{\partial x}, \frac{\partial}{\partial y})$.
Our  Lemma \ref{lem:zwei} says that
the tangent space $T_f \Delta_\lambda$ consists
of all binary forms $g$ of degree $n$ such that
$\ell_1^{\lambda_1-1}
\ell_2^{\lambda_2-1} \cdots \ell_d^{\lambda_d-1}$ is apolar to $g$.
By \cite[Lemma 1.31]{IK}, this condition is equivalent to 
$g$ having a  {\em generalized additive decomposition} of the form
$\,g =  \ell_1^{n-(\lambda_1-1)+1} g_1 + 
\cdots + \ell_d^{n-(\lambda_d-1)+1} g_d$. 
Here, only terms
with $\lambda_i \geq 2$ may appear.
This is precisely the representation on the right of 
(\ref{eq:f_and_g}), and we conclude that the
proposed parametric representation of the conormal variety ${\rm Con}_\lambda$ is correct.

The parametrization is finite-to-one because ${\rm dim}({\rm Con}_\lambda) = n-1$,
which is the dimension of the parameter space $  (\PP^1)^{m_1} \times \mathcal{J}_\lambda$.
  Consider the fiber over a general point  $(f,g)$ in $ {\rm Con}_\lambda$.
  The first entry $f$ is the image of  precisely $m_1!m_2!\cdots m_p!$ 
  points  in $(\PP^1)^d$.  For each such point
  $\bigl((s_1:t_1),\ldots,(s_d:t_d)\bigr)$, the second entry
  gives a homogeneous system of linear equations:
 \begin{equation}
 \label{eq:dualpara2}
  {\rm const.} \cdot g(u,v) \,\, = \,\, 
\sum_{i=1 \atop \lambda_i \neq 1}^d (s_i u + t_i v)^{n-\lambda_i+2} \cdot g_i(u,v). 
\end{equation}
The unknowns are the coefficients of $g_1,\ldots,g_d$. The
solution set is a linear subspace of $\mathcal{J}_\lambda$.
It is non-empty since $(f,g)$ was assumed to lie in
$ {\rm Con}_\lambda$. Since the parametrization is finite-to-one,
the solution space of the linear system must be a point. We conclude that the
parametrization of the conormal variety ${\rm Con}_\lambda$ 
given in (\ref{eq:f_and_g}) has
degree $m_1!m_2!\cdots m_p!$.
\end{proof}

At this point it is important to note that our approach in this
section is quite classical. All the ingredients are known and
have been published elsewhere. What is new is their arrangement
and interpretation. For instance, the parametrization (\ref{eq:f_and_g})
appears in \cite{IK} but the connection to dual and conormal varieties
was not made explicit. Likewise, our Lemma~\ref{lem:eins}
is among the statements in \cite[Theorem 7.1]{Kat}.  The dual variety 
does make an appearance in \cite[Section 6]{Kat} but it was not seen
 that the dual of a multiple loot locus for a hook is again such a locus.
By putting all the known puzzle pieces together, we can go further in this paper.
For instance, Katz states an inequality in \cite[Corollary 6.4]{Kat}. He conjectures
in his introduction \cite[p.~220]{Kat} that equality holds.
The following result  proves Katz' conjecture.

\begin{corollary} The variety of binary forms of degree $n$ with a root of multiplicity $a$ 
satisfies
$$\deg \bigl((\Delta_{\{1^{n-a},a \} })^\vee \bigr) \,\,=
\,\, \deg(\Delta_{ \{ 1^{n-a+1},a-1 \} }) \,\, = \,\,
(a-1)(n-a+2). $$
\end{corollary}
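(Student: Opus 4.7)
The plan is to derive this as a direct corollary of Corollary~\ref{cor:selfdual} combined with Hilbert's formula (\ref{eq:hilbert}), with no new ingredients required. First I would invoke Corollary~\ref{cor:selfdual} with the hook $\lambda=\{1^{n-a},a\}$: it identifies the dual variety with another multiple root locus, namely $(\Delta_{\{1^{n-a},a\}})^{\vee}=\Delta_{\mu}$ where $\mu=\{1^{a-2},\,n-a+2\}$. This reduces the first equality in the statement to computing the Hilbert degrees of two concrete strata.

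Next I would plug $\mu$ into formula (\ref{eq:hilbert}). With $d=a-1$ parts, multiplicities $m_{1}=a-2$ and $m_{n-a+2}=1$, and product of parts equal to $n-a+2$, this yields
\[
\deg(\Delta_{\mu})\,=\,\frac{(a-1)!}{(a-2)!\,1!}\cdot(n-a+2)\,=\,(a-1)(n-a+2).
\]
For the second equality I would apply (\ref{eq:hilbert}) to $\{1^{n-a+1},a-1\}$, where (assuming $a\geq 3$, so that the class $a-1$ is genuinely distinct from $1$) one has $d=n-a+2$, $m_{1}=n-a+1$, $m_{a-1}=1$, and product of parts equal to $a-1$, giving
\[
\deg(\Delta_{\{1^{n-a+1},a-1\}})\,=\,\frac{(n-a+2)!}{(n-a+1)!\,1!}\cdot(a-1)\,=\,(n-a+2)(a-1).
\]
Both computations return the same number $(a-1)(n-a+2)$, closing the chain of equalities.

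There is essentially no obstacle here beyond correctly counting the $m_{j}$'s; all the substance is loaded into Corollary~\ref{cor:selfdual}, where the duality for hook partitions was established via the apolarity argument of Theorem~\ref{main2}. The one point I would be careful about is the boundary case $a=2$, in which the partition $\{1^{n-a+1},a-1\}$ degenerates to $\{1^{n}\}$ (and the stratum is the whole $\PP^{n}$ of degree $1$); one either restricts the second equality to $a\geq 3$ or reads the partition with its two classes kept formally distinct so that Hilbert's formula is applied as above. With that caveat, the statement is an immediate consequence of what has been proved, and in particular confirms Katz's conjectured equality in \cite[Corollary~6.4]{Kat}.
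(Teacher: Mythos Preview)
Your proposal is correct and follows exactly the same route as the paper: invoke Corollary~\ref{cor:selfdual} to identify the dual as $\Delta_{\{1^{a-2},\,n-a+2\}}$, then apply Hilbert's formula~(\ref{eq:hilbert}) to both strata. Your flagging of the degenerate case $a=2$ (where $\{1^{n-a+1},a-1\}=\{1^{n}\}$ has degree~$1$, so the middle expression must be read formally or the equality restricted to $a\geq 3$) is a valid caveat that the paper leaves implicit.
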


\begin{proof}
The dual on the left is $\Delta_{ \{1^{a-2},n-a+2 \} }$ by Corollary~\ref{cor:selfdual}.
Using Hilbert's formula  \eqref{eq:hilbert},
we see that both varieties have the same degree, namely $\,(a-1)(n-a+2)$.
\end{proof}

We close this section by recording the dimension of the dual variety,
and by pointing out that our parametrization is in fact always {\em identifiable},
i.e.~birational modulo permutations.

\begin{theorem} \label{thm:neun}
For any partition $\lambda$ of $n$, the
dual variety $(\Delta_\lambda)^\vee$ has dimension~$n{-}m_1{-}1$.
The generalized additive decomposition in (\ref{eq:f_and_g})
represents a  finite-to-one parametrization 
$\, \mathcal{J}_\lambda \dashrightarrow (\Delta_{\lambda})^{\vee} \subset \PP^n$.
This rational map has degree  $m_2!\cdots m_p!$ and it is given~by
\begin{equation}
\label{eq:dualpara} 
\biggr(\bigl((t_i:s_i))_{\lambda_i \not=1} \bigr) \,,\, (g_1,\ldots,g_d) \biggr) \,\,\mapsto \,\,\,
g(u,v) \, \,=\, \sum_{i=1 \atop \lambda_i \neq 1}^d (s_i u + t_i v)^{n-\lambda_i+2} \cdot g_i(u,v). \quad
\end{equation}
\end{theorem}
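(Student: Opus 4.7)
The plan is to deduce Theorem~\ref{thm:neun} from Theorem~\ref{main2} by projecting onto the second factor of $\PP^n \times \PP^n$.

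The parametrization $\Psi\colon (\PP^1)^{m_1} \times \mathcal{J}_\lambda \dashrightarrow {\rm Con}_\lambda$ of Theorem~\ref{main2}, composed with the projection $\pi_2\colon {\rm Con}_\lambda \to \PP(V^\vee)$, yields a map onto $(\Delta_\lambda)^\vee$ that does not depend on the $(\PP^1)^{m_1}$ factor, because the expression for $g$ in~(\ref{eq:f_and_g}) only involves indices $i$ with $\lambda_i \geq 2$. Hence $\pi_2 \circ \Psi = \Phi \circ p_2$, where $p_2$ is the projection to $\mathcal{J}_\lambda$ and $\Phi$ is the map in~(\ref{eq:dualpara}). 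Dominance of $\Phi$ onto $(\Delta_\lambda)^\vee$ is then immediate.

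For the dimension, I would use Corollary~\ref{main3}: $(\Delta_\lambda)^\vee$ is the join of the $d-m_1$ hook loci $\Delta_{\{1^{\lambda_i-2},\, n-\lambda_i+2\}}$ indexed by those $i$ with $\lambda_i \geq 2$. By Terracini's Lemma, the affine tangent space at a generic point $\bar g = \sum \ell_i^{n-\lambda_i+2} g_i$ is the sum of the affine tangent spaces of the hook loci at the summands. Applying Lemma~\ref{lem:eins} to each hook identifies these summands as $\ell_i^{n-\lambda_i+1} \cdot K[u,v]_{\lambda_i - 1}$. The main obstacle is to verify that the total sum
\[
\sum_{\lambda_i \geq 2} \ell_i^{n-\lambda_i+1} \cdot K[u,v]_{\lambda_i - 1} \,\,\subset\,\, K[u,v]_n
\]
is direct for a generic choice of $\ell_i$. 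This is a classical generic-injectivity statement for ``power-times-form'' subspaces, and I would verify it by specializing the $\ell_i$ to an explicit configuration (for instance, $\ell_i = u - \zeta_i v$ with distinct $\zeta_i$, using a vanishing-order count at the roots). Once directness is in hand, the sum has vector-space dimension $\sum_{\lambda_i \geq 2} \lambda_i = n - m_1$, so $\dim (\Delta_\lambda)^\vee = n - m_1 - 1 = \dim \mathcal{J}_\lambda$, and the dominant map $\Phi$ between varieties of equal dimension is generically finite.

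For the degree of $\Phi$, I would use biduality. The fiber $\pi_2^{-1}(\bar g) \subset \PP(V)$ over a smooth point $\bar g$ is the projectivized conormal space of $(\Delta_\lambda)^\vee$ at $\bar g$, which is a projective linear subspace of dimension $n - 1 - (n - m_1 - 1) = m_1$. Since this $\PP^{m_1}$ is irreducible, the ``heavy'' factor $\bar f = \prod_{\lambda_i \geq 2} (t_i x - s_i y)^{\lambda_i}$ must be common to every $f$ in the fiber, so the fiber coincides with $\bar f \cdot \PP(K[x,y]_{m_1})$. It follows that among the $m_1! m_2! \cdots m_p!$ preimages under $\Psi$ of a fixed $(f,\bar g)$, the $m_1!$ permutations of simple roots act only on the $(\PP^1)^{m_1}$ coordinate while the $m_j!$ permutations for $j \geq 2$ act within $\mathcal{J}_\lambda$. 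Projecting by $p_2$ therefore gives exactly $m_2! \cdots m_p!$ distinct points of $\Phi^{-1}(\bar g)$, yielding the claimed degree.
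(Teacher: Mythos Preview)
Your argument is correct and reaches the same conclusion as the paper, but by a genuinely different route in both parts.

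For the dimension, the paper simply cites \cite[Corollary~7.3]{Kat}, whereas you give a self-contained proof via Terracini's lemma. The directness of $\sum_{\lambda_i\ge 2}\ell_i^{\,n-\lambda_i+1}K[u,v]_{\lambda_i-1}$ that you need is most cleanly seen by passing to orthogonal complements under the apolar pairing: one checks that $f\in K[x,y]_n$ is orthogonal to $\ell_i^{\,n-\lambda_i+1}K[u,v]_{\lambda_i-1}$ iff $(t_ix-s_iy)^{\lambda_i}\mid f$, so for distinct $(s_i:t_i)$ the common orthogonal is $\bigl(\prod_{\lambda_i\ge2}(t_ix-s_iy)^{\lambda_i}\bigr)\cdot K[x,y]_{m_1}$, of dimension $m_1+1$. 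This is exactly your ``vanishing-order count at the roots'' and gives the sum its expected dimension $n-m_1$.

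For the degree, the paper argues via Lemma~\ref{lem:zwei}: the annihilation condition $\prod_{\lambda_i\ge2}\ell_i^{\lambda_i-1}(\partial)\,g=0$ pins down the unordered set of heavy linear forms from a generic $g$, after which the $g_i$ are recovered linearly as in Theorem~\ref{main2}. Your route---biduality forces the fiber $\pi_2^{-1}(\bar g)$ to be a single $\PP^{m_1}$, and since the parametrization already fills a $\PP^{m_1}$ of the form $\bar f\cdot\PP(K[x,y]_{m_1})$, the heavy factor $\bar f$ must be unique---arrives at the same uniqueness from the geometric side. The two arguments are essentially dual: the paper works through the apolar annihilator, you through the conormal fiber. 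Your approach buys independence from the Katz reference; the paper's is shorter once Lemma~\ref{lem:zwei} is available.
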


\begin{proof} It was shown in \cite[Corollary 7.3]{Kat} that $
\dim((\Delta_{\lambda})^{\vee}) =n-m_1-1$. This is also the dimension of
the parameter space $\mathcal{J}_\lambda$.
Theorem \ref{main2} implies that the map (\ref{eq:dualpara}) is finite-to-one.

Fix a generic point $g(u,v)$ in
the dual variety $(\Delta_\lambda)^\vee$. Consider
any smooth point $f(x,y)$ of the primal $\Delta_\lambda$ at which $g$ is tangent.
The unordered set of linear forms $\ell_i$ that appear with multiplicity $\geq 2$
in $f(x,y)$ can be recovered uniquely by Lemma~\ref{lem:zwei}. Permuting
linear forms that appear with the same multiplicity accounts for the size
$m_2!\cdots m_p!$ of the fiber when writing out the multipliers 
$(s_i u + t_iv)^{n-\lambda_i+2}$ in (\ref{eq:dualpara}).
At this point, we still need to recover the forms
$(g_1,\ldots,g_d)$, but this can be done uniquely by the
same argument as in the proof of Theorem~\ref{main2}. We conclude 
that the degree of the map (\ref{eq:dualpara}) equals $m_2 ! \cdots m_p !$
\end{proof}

\begin{example} 
 \label{ex:notmorphism} \rm
 The rational parametrization in Theorem \ref{thm:neun}     can have base points,
 so it is generally not a morphism. Let $\lambda = (3,2)$, so 
 $(\Delta_\lambda)^\vee$ is the  hypersurface in $\PP^5$
 referred to as {\em little apple} in Example~\ref{ex:apple}.
 The toric fourfold $\mathcal{J}_\lambda$ is the free
 join of $\PP^1 \times \PP^1$ and $\PP^1 \times \PP^0$.
 In these products,   we fix the points $\bigl((s_1:t_1),g_1\bigr)$ 
 and $\bigl((s_2:t_2),g_2 \bigr)$,
 where  $ s_1 = s_2 = t_1 = t_2 = 1$,  $ g_1 = u+v$, and $g_2 =  -1$. 
 We also fix the scalar $1$ for a point on the line that joins them. These choices specify
 a point in $\mathcal{J}_\lambda$. Plugging into the
 formula  (\ref{eq:dualpara}), we obtain
 $$ g(u,v) \, =\,  (s_1 u + t_1 v)^4 \cdot g_1 \, + \,
 (s_2 u + t_2 v)^5 \cdot g_2 \,= \,  
  \,\, (u+v)^4 \cdot (u+v) \,+\, (u+v)^5 \cdot (-1)  \,= \, 0 . $$
  Hence our point    in $\mathcal{J}_\lambda$
  is a base point of the parametrization  (\ref{eq:dualpara})  for $\lambda = (3,2)$.
  \hfill $\diamondsuit$
 \end{example}
 
\section{Equations, Multidegree, and More}

Given any parametrically represented variety, it is natural
to ask for its {\em implicitization}. This concerns finding
the ideals of polynomials that vanish on ${\rm Con}_\lambda$,
on $\Delta_\lambda$, and on $(\Delta_\lambda)^\vee$.
For the multiple root loci, this is a well-studied
problem \cite{Chi1, Kur, Wey}. Before reviewing what is known,
we give the relevant definitions and we present
a census of our varieties for $n \leq 7$.

\begin{table}
\begin{center}
$$
\begin{array}{|c|c|c|c|c||c|}
\hline
\lambda & \text{Eqns of $\Delta_\lambda$} & \text{Multidegree} & \text{Eqns
of $(\Delta_\lambda)^\vee$} & \text{Hooks} &\text{ED-degrees}\\
\hline
\hline
2 & 2 & 2,2 & 2 & 2 & 2, 4 \\
\hline
3 & 2^3 & 0,3,4 & 4 & 21 & 3, 7 \\
21 & 4 & 4,3,0 & 2^3 & 3 & 3, 7 \\
\hline
4 & 2^6 & 0,0,4,6 & 6 & 211& 4, 10 \\
31 & 2^1,3^1 & 0,6,6,0 & 2^1,3^1 & 31 & 4, 12 \\
211 & 6 & 6,4,0,0 & 2^6 & 4 & 4, 10 \\
22 & 3^7 & 0,4,6,3 & 3 & 4,4 & 7, 13 \\
\hline
5 & 2^{10} & 0,0,0,5,8 & 8 & 2111 & 5, 13 \\
41 & 2^3 & 0,0,8,9,0 & 4^6 & 311 & 5, 17 \\
311 & 4^6 & 0,9,8,0,0 & 2^3 & 41 & 5, 17 \\
2111 & 8 & 8,5,0,0,0 & 2^{10} & 5 & 5, 13 \\
221 & 5^{10} & 0,12,16,6,0 & 3^4 & 5,5 & 16, 34 \\
32 & 4^{28} & 0,0,12,21,12 & 12 & 41,5 & 21, 45 \\
\hline
6 & 2^{15} & 0,0,0,0,6,10 & 10 & 21111 & 6, 16 \\
51 & 2^6 & 0,0,0,10,12,0 & 4^1,5^3,6^1 & 3111 & 6, 22 \\
411 & 2^1,3^3,4^1 & 0,0,12,12,0,0 & 2^1,3^3,4^1 & 411 & 6, 24 \\
3111 & 4^1,5^3,6^1 & 0,12,10,0,0,0 & 2^6 & 51 & 6, 22 \\
21111 & 10 & 10,6,0,0,0,0 & 2^{15} & 6 & 6 , 16 \\
2211 & 7^{13} & 0,24,30,10,0,0 & 3^{10} & 6,6 & 28, 64 \\
222 & 4^{45} & 0,0,8,16,12,4 & 4 & 6,6,6 & 20, 40 \\
33 & 3^{29} & 0,0,0,9,18,12 & 12 & 51,51 & 19, 39 \\
321 & 4^1,5^3,6^{31} & 0,0,36,56,24,0 & 4^1,6^1 & 51,6 & 44, 116 \\
42 & 2^1,3^3,4^{31} & 0,0,0,16,30,18 & 18 & 411,6 & 26, 64 \\
\hline
7 & 2^{21} & 0,0,0,0,0,7,12 & 12 & 211111 & 7, 19\\
61 & 2^{10} & 0,0,0,0,12,15,0 & 6^{10} & 31111 & 7, 27 \\
511 & 2^3,3^4 & 0,0,0,15,16,0,0 & 4^{20} & 4111 & 7, 31 \\
4111 & 4^{20} & 0,0,16,15,0,0,0 & 2^3,3^4 & 511 & 7, 31\\
31111 & 6^{10} & 0,15,12,0,0,0,0 & 2^{10} & 61 & 7, 27\\
211111 & 12 & 12,7,0,0,0,0,0 & 2^{21} & 7 & 7, 19 \\
22111 & 9^{16}  & 0,40,48,15,0,0,0 & 3^{20} & 7,7 & 43, 103 \\
2221 & 6^{78} & 0,0,32,60,40,10,0 & 4^6 & 7,7,7 & 62, 142 \\
3211 & 6^{10}, 8^{38} & 0,0,72,105,40,0,0 & 4^5, 6^{10}& 61,7 & 73, 217  \\
322 & 6^{364} & 0,0,0,36,80,66,24 & 24 & 61,7,7 & 94, 206 \\
331 & 3^{10} & 0,0,0,27,48,24,0 & 7^8 & 61,61 & 39, 99 \\
421 & 4^{20}, 6^{42}& 0,0,0,48,80,36,0 & 8^1,10^3,11^2,12^{49} & 511,7 & 52, 164 \\
43 & 3^{10}, 4^{66} & 0,0,0,0,24,51,36 & 36 & 511,61 & 51, 111  \\
52 & 2^3,3^4,4^{38} & 0,0,0,0,20,39,24 & 24 & 4111,7 & 31, 83 \\
\hline
\end{array}
$$
\caption{Multiple root loci, their duals, and their conormal varieties for $n \leq 7$. 
\label{tab:eins}  }
\end{center}
\end{table}

We write $I({\rm Con}_{\lambda})$ for the 
ideal of the conormal variety
in the $\mathbb{N}^2$-graded polynomial ring
$$ \QQ[{\bf a},{\bf b}] = 
\QQ[a_0,a_1,\ldots,a_n, b_0,b_1,\ldots,b_n], \quad {\rm with}
\quad {\rm deg}(a_i) = (1,0), \,{\rm deg}(b_i) = (0,1). $$
The ideal $I({\rm Con}_\lambda)$ is bihomogeneous and prime.
According to textbook definition \cite[\S 8.5]{MS}, the  {\em multidegree}  
of the $\mathbb{N}^2$-graded algebra
 $\QQ[{\bf a},{\bf b}]/I({\rm Con}_\lambda)$
is a binary form of degree $n+1$:
\begin{equation}
\label{eq:multidegree}
 \mathcal{C}_\lambda (t_1,t_2) \,= \, 
\delta_1 t_1 t_2^n+ 
\delta_2 t_1^2 t_2^{n-1}+ 
\cdots + \delta_{n} t_1^n t_2.
\end{equation}
The coefficients $\delta_i$ are nonnegative integers, 
known among geometers as the {\em polar classes} of
the variety $\Delta_\lambda$.
Geometrically, $\delta_i$ is the number of intersection points
in $(L_i \times M_{i}) \cap {\rm Con}_\lambda$
where $L_i$ is a generic plane of dimension $i$ in $\PP(V)$
and $M_{i}$ is a generic plane of codimension $i{-}1$ in $\PP(V^\vee)$.
In particular, we have $\delta_i = 0$ for $i < {\rm codim}(\Delta_\lambda)$
and for $i > {\rm dim}\bigl((\Delta_\lambda)^\vee)+1$.

The ideal of the multiple root locus $\Delta_\lambda$
is obtained by eliminating the variables $b_0,\ldots,b_n$,
and the ideal of its dual $(\Delta_\lambda)^\vee$ 
is obtained by eliminating the variables $a_0,\ldots,a_n$.
In symbols, 
\begin{equation}
\label{eq:elimideal}
I(\Delta_\lambda) \,=\, I({\rm Con}_\lambda) \,\cap\, \mathbb{Q}[{\bf a}]
\quad \hbox{and} \quad
I\bigl((\Delta_\lambda)^\vee\bigr) \,=\, I({\rm Con}_\lambda)\, \cap\, \mathbb{Q}[{\bf b}].
\end{equation}
These elimination ideals are $\mathbb{N}$-graded and prime.
Their degrees and codimensions in the respective $\PP^n$
can be read off from the first term and the last term of the multidegree:
$$ \mathcal{C}_\lambda(t_1,t_2) \, = \,
{\rm deg}(\Delta_\lambda) \cdot t_1^{{\rm codim}(\Delta_\lambda)} t_2^{{\rm dim}(\Delta_\lambda)+1} \,+\,\cdots \,+\, {\rm deg}((\Delta_\lambda)^\vee)  \cdot
  t_1^{{\rm dim}((\Delta_\lambda)^\vee)+1}  t_2^{{\rm codim}((\Delta_\lambda)^\vee)} . $$

We computed the objects in (\ref{eq:multidegree}) and (\ref{eq:elimideal})
for all partitions  up to $n = 7$. Our results are listed in
Table \ref{tab:eins}. Each row corresponds to one partition $\lambda$.
The second column lists the degrees of the minimal generators
of $I(\Delta_\lambda)$. The third column
lists the polar classes $\delta_1,\delta_2,\ldots,\delta_n$.
The leftmost nonzero entry in that list is the degree of
$\Delta_\lambda$ and the rightmost nonzero entry is
the degree of $(\Delta_\lambda)^\vee$. The codimension
of the variety in question is the number of consecutive left (resp.~right) zeros plus one.
The fourth column lists the degrees of the minimal generators
of $I\bigl((\Delta_\lambda)^\vee\bigr)$.
The fifth column lists the corresponding collection of hooks 
$ \{1^{\lambda_i-2}\!,\,n-\lambda_i+2\}$ that make up
the dual variety in the join construction of Corollary~\ref{main3}.

Finally, the last column refers to the Euclidean distance degrees
in two coordinate systems. This will be explained
in Section 5. The second one is always
$\mathcal{C}_\lambda(1,1) = \delta_1 + \delta_2 + \cdots + \delta_n$.

We now discuss some general facts that a reader might discover
by  looking at Table~\ref{tab:eins}.  Whenever all hooks 
dual to $\lambda$ are identical then $(\Delta_\lambda)^\vee$ is
a secant variety of that hook variety.

\begin{proposition}
\label{prop:sigmak}
Suppose $n \geq k(n-a+2)$. The $k$-th secant variety of the variety of
 binary forms of degree $n$ with a root of multiplicity $a$
is projectively dual to the multiple root locus with partition
$\lambda = \{ 1^{n-k(n-a+2)}, (n-a+2)^k\}$. In symbols,
\begin{equation}
\label{eq:sigmak}
\sigma_k\left(\Delta_{\{1^{n-a}, a\}}\right) \, = \, 
(\Delta_\lambda)^\vee.
\end{equation}
This secant variety is non-defective: it 
has the expected dimension $k(n-a+1)+k-1$.
\end{proposition}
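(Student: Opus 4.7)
The plan is to deduce this proposition directly from the structural results already established, namely Corollary~\ref{main3} (which describes $(\Delta_\lambda)^\vee$ as a join of hook loci) and Theorem~\ref{thm:neun} (which gives the dimension as $n-m_1-1$). The proof is essentially a bookkeeping exercise on partitions.

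First, I would verify that the hypothesis $n \geq k(n-a+2)$ is exactly what is needed for $\lambda = \{1^{n-k(n-a+2)}, (n-a+2)^k\}$ to be a valid partition of $n$, since $m_1 = n - k(n-a+2) \geq 0$ and the parts sum to $m_1 + k(n-a+2) = n$. Next, I would identify the hooks produced by Corollary~\ref{main3}. The parts of $\lambda$ that are $\geq 2$ are precisely the $k$ copies of $(n-a+2)$; each of these contributes a dual hook
$$\Delta_{\{1^{(n-a+2)-2},\, n-(n-a+2)+2\}} \,=\, \Delta_{\{1^{n-a},\, a\}}.$$
Hence Corollary~\ref{main3} gives
$$(\Delta_\lambda)^\vee \,=\, \mathrm{Join}\bigl(\Delta_{\{1^{n-a},a\}},\, \ldots,\, \Delta_{\{1^{n-a},a\}}\bigr) \qquad (k \text{ copies}),$$
and by definition the join of $k$ copies of a variety is its $k$-th secant variety, which establishes \eqref{eq:sigmak}.

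For the dimension statement, I would apply Theorem~\ref{thm:neun}, which gives
$$\dim\bigl((\Delta_\lambda)^\vee\bigr) \,=\, n - m_1 - 1 \,=\, n - \bigl(n - k(n-a+2)\bigr) - 1 \,=\, k(n-a+2) - 1 \,=\, k(n-a+1) + k - 1.$$
Since $\Delta_{\{1^{n-a},a\}}$ has dimension $n-a+1$ (it is parametrized by $d = n-a+1$ points in $\PP^1$), the expected dimension of its $k$-th secant variety is $k\bigl((n-a+1)+1\bigr) - 1 = k(n-a+1) + k - 1$, matching the value computed above. Non-defectivity follows.

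I do not foresee a substantive obstacle here: the heavy lifting was already done in Theorem~\ref{main2}, Corollary~\ref{main3}, and Theorem~\ref{thm:neun}. The only minor care point is correctly translating between the join description of $(\Delta_\lambda)^\vee$ and the conventional secant variety $\sigma_k$, and checking that the partition $\lambda$ in the statement is exactly the one whose dual hook multiplicities produce $k$ identical copies of $\Delta_{\{1^{n-a},a\}}$.
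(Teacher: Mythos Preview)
Your proposal is correct and follows essentially the same approach as the paper: the paper's proof simply observes that this is the special case of Corollary~\ref{main3} where $\lambda$ has rectangular shape with singleton blocks, and cites the first sentence of Theorem~\ref{thm:neun} for the dimension claim. You have spelled out exactly those two steps in more detail.
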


\begin{proof}
This is the special case of Corollary~\ref{main3}
where $\lambda$ has the shape of a rectangle together with
some singleton blocks, say $\lambda =  \{1^{n-ku} , u^k\}$,
and we set $a = n-u+2$. The statement  concerning the dimension
follows from the first sentence in Theorem~\ref{thm:neun}.
\end{proof}

The best known special case of this statement arises when $a=n$, or $u=2$.
The dual of $\Delta_{\{1^{n-2k},2^k\}}$ is the $k$-th secant variety
of the rational normal curve $\Delta_{(n)}$. Its ideal is generated in
degree $k+1$, namely by the minors of a Hankel matrix. We see this
in Table~\ref{tab:eins} for $\lambda = 21,211,22,2111,221,\ldots$.
Also of interest is the case of a rectangular partition, when the
  secant variety  (\ref{eq:sigmak}) is a hypersurface. Oeding's
  formula (\ref{eq:oeding}) and Proposition~\ref{prop:sigmak} imply
  
\begin{corollary}
The secant variety referred to in Proposition \ref{prop:sigmak},
under the same hypothesis,
is a hypersurface if and only if $n = k(n-a+2)$. The degree of that hypersurface~is
$$ 
{\rm deg} \bigl(\,\sigma_k(\,\Delta_{\{1^{n-a}, \,a\}}\,)\, \bigr) \,\, = \,\,\, (k+1) (n-a+1)^k.
$$
\end{corollary}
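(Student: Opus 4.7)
The plan is to combine Proposition~\ref{prop:sigmak} with the two pieces of Oeding's theorem already recalled in the introduction: the characterization of when $(\Delta_\lambda)^\vee$ is a hypersurface (namely $m_1 = 0$), and the degree formula~(\ref{eq:oeding}). By Proposition~\ref{prop:sigmak} the secant variety in question equals $(\Delta_\lambda)^\vee$ for the partition
$$\lambda \,=\, \{1^{n-ku},\,u^k\} \quad\hbox{with}\quad u := n-a+2.$$
First I would observe that for this $\lambda$ the multiplicity of $1$ is $m_1 = n-ku$, so Oeding's criterion says $(\Delta_\lambda)^\vee$ is a hypersurface if and only if $n = ku = k(n-a+2)$. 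This gives the equivalence claimed in the corollary.

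Next, assuming this equality, $\lambda$ degenerates to the rectangular partition $\{u^k\}$ with $d = k$ equal parts. Plugging into Oeding's formula~(\ref{eq:oeding}), the only nonzero multiplicity among $m_2,\ldots,m_p$ is $m_u = k$, so the multinomial factor collapses to
$$\frac{(d+1)!}{m_2!\cdots m_p!} \,=\, \frac{(k+1)!}{k!} \,=\, k+1,$$
and the product $\prod_{i=1}^d(\lambda_i - 1)$ reduces to $(u-1)^k = (n-a+1)^k$. Multiplying these two factors yields the stated degree $(k+1)(n-a+1)^k$.

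There is essentially no obstacle here; the argument is a direct substitution into (\ref{eq:oeding}) once Proposition~\ref{prop:sigmak} has been applied. The only point requiring a moment of care is confirming that Oeding's denominator runs over $m_2,\ldots,m_p$ (excluding $m_1$), so that the rectangular case produces the clean factor $k+1$ rather than something depending on $n-ku$; this is exactly why the hypersurface hypothesis $m_1 = 0$ is what makes the formula collapse so cleanly.
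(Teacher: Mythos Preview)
Your proposal is correct and is exactly the argument the paper intends: the corollary is stated immediately after noting that ``Oeding's formula (\ref{eq:oeding}) and Proposition~\ref{prop:sigmak} imply'' it, and your substitution of the rectangular partition $\{u^k\}$ into (\ref{eq:oeding}) together with the $m_1=0$ criterion is precisely how those two ingredients combine.
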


\begin{example} \rm
Let $k=2$, $n=6$, $a = 5$, so $\lambda = (3,3)$.
The variety (\ref{eq:sigmak}) is a hypersurface of degree $12$ in $\PP^6$.
It consists of all binary  sextics $\, f =  \ell_1^5 \ell_2 + \ell_3^5 \ell_4 $
where the $\ell_i$ are linear forms. 

Consider also the case $n=12$. Here the construction gives four interesting hypersurfaces:
$$ \begin{matrix} 
k & a  &  \lambda &  {\rm forms} & {\rm degree} \\
2 &   8  &     (6,6)       &       \ell_1^ 8 g_1 + \ell_2^8 g_2            &        75             \\
3 & 10  &      (4,4,4)       &  \ell_1^{10} g_1 + \ell_2^{10} g_2 + \ell_3^{10} g_3        &  108              \\
4 & 11  & (3,3,3,3)  &  \ell_1^{11} g_1 + \ell_2^{11} g_2 + \ell_3^{11} g_3 + \ell_4^{11} g_4   & 80  \\
6 & 12    &     (2,2,2,2,2,2)     &   \ell_1^{12}  + \ell_2^{12} + \cdots + \ell_6^{12}         &      7  \\
\end{matrix}
$$
Of course, the last hypersurface is defined by the determinant of a  $7 \times 7$ Hankel matrix.
\hfill $\diamondsuit$
\end{example}

The containment relation among multiple root loci is
the order on partitions by refinement. Indeed,
$\Delta_\lambda \subset \Delta_\mu$ if and only if
$\mu$ {\em refines} $\lambda$, i.e.~every part of $\lambda$ is
a sum of parts of~$\mu$. Our next result characterizes
the containment relation among their dual varieties.
Note that
\begin{equation}
\label{eq:dualcontainment}
(\Delta_\lambda)^\vee \subset  (\Delta_\mu)^\vee
\end{equation}
cannot hold unless $\lambda$ has more $1$'s than $\mu$,
for dimension reasons.
Given a partition $\lambda = \{ 1^{m_1}, 2^{m_2}, \ldots, p^{m_p} \}$,
we write $\lambda'$ for the partition $\{1^{m_2} ,\ldots ,(p-1)^{m_p}\}$.
Note that if $\lambda$ is a partition of $n$ then 
$\lambda'$ is a partition of $|\lambda'| = n-d$, where $d=\sum m_i$ is the number of parts.

\begin{proposition}
Given two partitions $\lambda$ and $\mu$ of $n$, the inclusion (\ref{eq:dualcontainment}) holds
if and only if $|\lambda'| \leq |\mu'|$
and, by adding to the parts, $\lambda'$ can be transformed to a partition $\lambda''$
refined by $\mu'$.
\end{proposition}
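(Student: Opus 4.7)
The plan is to convert the containment into a statement about apolar ideals, using the apolarity lemma (Lemma~\ref{lem:zwei}) and Corollary~\ref{main1}. For any $g \in \PP^n$, write $\mathrm{ann}(g) \subset K[X,Y]$ for the apolar ideal. Then $g$ lies in $(\Delta_\lambda)^\vee$ precisely when $\mathrm{ann}(g)$ meets $\overline{\Delta_{\lambda'}} \subset \PP(K[u,v]_{|\lambda'|})$, i.e.~when $g$ admits an apolar form of degree $|\lambda'|$ whose factorization shape is $\lambda'$ or a coarsening of it; the analogous statement holds with $\mu$ in place of $\lambda$. The question therefore reduces to: decide when every apolar witness in $\overline{\Delta_{\lambda'}}$ (in degree $|\lambda'|$) can be upgraded to an apolar witness in $\overline{\Delta_{\mu'}}$ (in degree $|\mu'|$).

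For the sufficient direction, let $\lambda' = (\alpha_1,\ldots,\alpha_s)$ and choose the partition $\lambda'' = (\alpha_1+\beta_1,\ldots,\alpha_s+\beta_s)$ from the hypothesis, with $\beta_j \geq 0$ and $\mu'$ refining $\lambda''$. Take a generic $g \in (\Delta_\lambda)^\vee$ and the witness $F = \prod_j \ell_j^{\alpha_j}$ of shape exactly $\lambda'$. The product $F' = F \cdot \prod_j \ell_j^{\beta_j} = \prod_j \ell_j^{\alpha_j+\beta_j}$ still lies in $\mathrm{ann}(g)$, has degree $|\mu'|$, and has shape exactly $\lambda''$. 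Since $\mu'$ refines $\lambda''$, the shape $\lambda''$ is a coarsening of $\mu'$, so $F' \in \overline{\Delta_{\mu'}}$, certifying $g \in (\Delta_\mu)^\vee$. As $(\Delta_\mu)^\vee$ is closed and this covers a dense subset, $(\Delta_\lambda)^\vee \subset (\Delta_\mu)^\vee$.

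For necessity, take a generic $g \in (\Delta_\lambda)^\vee$. By the apolarity theory for binary forms (\cite[Chapter~1]{IK}), $\mathrm{ann}(g) = (F_\lambda, H)$ is a complete intersection with $F_\lambda = \prod_j \ell_j^{\alpha_j}$ of shape exactly $\lambda'$ in degree $|\lambda'|$, and $H$ of degree $n+2-|\lambda'|$ with no special factorization. The assumed containment furnishes $F_\mu \in \mathrm{ann}(g)_{|\mu'|}$ whose shape is a coarsening of $\mu'$. Writing $F_\mu = F_\lambda \cdot A + H \cdot B$, I first handle the principal range $|\mu'| < n+2-|\lambda'|$, where $B=0$ is forced; then $F_\mu = F_\lambda \cdot A$, giving $|\lambda'| \leq |\mu'|$. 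Factoring $A = \prod_j \ell_j^{\beta_j} \prod_k m_k^{\gamma_k}$ into old and new linear factors exhibits the shape of $F_\mu$ as the multiset $(\alpha_j+\beta_j)_j \cup (\gamma_k)_k$; each part must be a sum of parts of $\mu'$. Absorbing each index $k$ into one of the $s$ indices $j$ via a partition $\{S_j\}_j$ and setting $\lambda''_j = \alpha_j + \beta_j + \sum_{k \in S_j}\gamma_k$ produces the required $\lambda''$.

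The main obstacle is the complementary range $|\mu'| \geq n+2-|\lambda'|$, in which $H\cdot B$ may be nonzero and contaminates the factorization analysis above. I expect to dispose of this by a semicontinuity argument: for generic $g$ the generator $H$ is sufficiently generic that any $F_\mu = F_\lambda A + H B$ lying in $\overline{\Delta_{\mu'}}$ must have $B=0$, since a nontrivial $H\cdot B$ summand would introduce generic linear factors into $F_\mu$ incompatible with the repetition pattern imposed by membership in $\overline{\Delta_{\mu'}}$. A fall-back is to degenerate $g$ along a curve in $(\Delta_\lambda)^\vee$ so that $H$ specializes to something compatible with $\overline{\Delta_{\mu'}}$, again reducing to the principal-range analysis and extracting the combinatorial condition as above.
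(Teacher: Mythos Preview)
Your sufficiency argument is correct and matches the paper's: multiply the apolar witness $F_\lambda$ of shape $\lambda'$ by $\prod_j \ell_j^{\beta_j}$ to obtain a form of shape $\lambda''$ lying in $\overline{\Delta_{\mu'}}$.

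For necessity, the structural claim that $\mathrm{ann}(g) = (F_\lambda, H)$ with $\deg F_\lambda = |\lambda'|$ for \emph{generic} $g \in (\Delta_\lambda)^\vee$ is false. Take $\lambda = (n)$ with $n \geq 6$: then $(\Delta_\lambda)^\vee$ is the discriminant hypersurface and $|\lambda'| = n-1$, but a generic $g$ on this hypersurface has apolar ideal generated in the two middle degrees $\lceil (n+1)/2 \rceil$ and $\lfloor (n+3)/2 \rfloor$ (the discriminant is irreducible and distinct from every catalecticant locus, so the generic apolar behaviour persists). Hence $\ell^{\,n-1}$ lies in $\mathrm{ann}(g)$ but is not a minimal generator; your decomposition $F_\mu = F_\lambda A + H B$ with those degrees is unavailable, and the threshold $n+2-|\lambda'|$ defining your ``principal range'' is not the relevant one. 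A partial repair is to \emph{choose} $g$ so that $\mathrm{ann}(g) = (F_\lambda, H)$ for a generic $H$ of degree $n+2-|\lambda'|$; such a $g$ exists and still lies in $(\Delta_\lambda)^\vee$, and your principal-range step then goes through. But the complementary range $|\mu'| \geq n+2-|\lambda'|$ survives: your heuristic that a nonzero $H\cdot B$ summand ``would introduce generic linear factors into $F_\mu$'' is not valid, since the sum $F_\lambda A + H B$ need not share any root with $H$, and the degeneration fall-back is not made precise. So necessity remains unproved in your write-up. (The paper's own treatment of that direction is, to be fair, brief: it essentially asserts the equivalence via Corollary~\ref{main1} and only spells out sufficiency.)
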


\begin{proof}
This can be seen from Corollary \ref{main1}. Let $\ell^{\lambda'}$
be the differential operator that is used in that corollary
to characterize $(\Delta_{\lambda'})^\vee$. Here $\ell$ represents
an arbitrary collection of linear forms.
Our refinement condition means that every form $g$ annihilated by $\ell^{\lambda'}$
is also annihilated by $\ell^{\mu'}$. This condition is  equivalent
to (\ref{eq:dualcontainment}).
More precisely, write $m_a$ for the number of parts in $\lambda'$, 
and $m_b$ for that in $\mu'$. If such a $\lambda''$ exists,  then we claim that
$$
\bigcup_{\ell_{m_a}}{\rm Ann}((\ell_{m_a})^{\lambda'})
\,\,\subset\,\,
\bigcup_{\ell_{m_a}}{\rm Ann}((\ell_{m_a})^{\lambda''})
\,\,\subset\,\,
\bigcup_{\ell_{m_b}}{\rm Ann}((\ell_{m_b})^{\mu'}).
$$
Here, $\ell_k$ is a $k$-vector of linear forms and the exponent is written in multi-index notation. The first inclusion holds since each $(\ell_{m_a})^{\lambda''}$ can be 
seen as a multiple of $(\ell_{m_a})^{\lambda'}$, and the second 
inclusion holds since being annihilated by $(\ell_{m_a})^{\lambda''}$ can 
be seen as a special case of being annihilated by $(\ell_{m_b})^{\mu'}$, 
where $\ell_{m_b}$ is obtained by duplicating some parts of $\ell_{m_a}$.
\end{proof}

\begin{example} \rm
We can check the inclusion (\ref{eq:dualcontainment}) for various cases in Table \ref{tab:eins}.
The inclusion holds for $\lambda = 2211$ and $\mu = 321$. Indeed,
$\lambda' = 11$, and  $\mu' = 21$ refines $\lambda''  = 21$.
 Likewise, if $\lambda = 321$ and $\mu = 222$ then
   $\mu'=111$ refines $\lambda' = \lambda'' = 21$.
   This explains the inclusions
 $(\Delta_{2211})^\vee \subset (\Delta_{321})^\vee \subset (\Delta_{222})^\vee \subset \PP^6$.
 It follows that the unique quartic polynomial $4^1$ that vanishes on
$ (\Delta_{321})^\vee $ must be the $4 \times 4$ Hankel determinant 
whose hypersurface is  $(\Delta_{222})^\vee$.
\hfill $\diamondsuit$
\end{example}

We now discuss some of the entries in the column {\em Eqns of $\Delta_\lambda$}
in Table \ref{tab:eins}, and thereby review the literature on  the ideals $I(\Delta_\lambda)$.
For $\lambda = (n)$,  we see the 
$\binom{n}{2}$ quadrics that define the rational number curve,
and for $\lambda = \{1^{n-2},2\}$ we get the discriminant of degree $2n-2$.
If $a \geq \lfloor n/2 \rfloor+ 2$ then $I(\Delta_{\{1^{n-a},a\}})$ is
generated in degree $\leq 4$, as shown by Weyman \cite{Wey}.
The case $a = \lfloor n/2 \rfloor + 1$ is of special interest:
here $\Delta_{\{1^{n-a},a\}}$ is the {\em nullcone},
i.e.~the variety defined set-theoretically by all ${\rm SL}_2$-invariants of binary  $n$-ics.
Note that the nullcone is self-dual when $n$ is even
(by Corollary~\ref{cor:selfdual}), and its ideal is generated
by quartics when $n$ is odd.

 Chipalkatti \cite[Conjecture 6.1]{Chi1} had conjectured
that $I(\Delta_\lambda)$ is generated in degree $\leq 4$ whenever
$\lambda$ has only  $d =2$ parts, and this was proved by
Abdesselam and Chipalkatti in \cite[Proposition 20]{AC2}.
For further details on their approach see \cite[Section 7]{AC1}.
The papers \cite{AC1, AC2, Chi1, Wey} stress the fact that all our ideals are
invariant under the action of ${\rm SL_2}$, and hence each of their
graded components are direct sums of irreducible ${\rm SL}_2$-modules.
Chipalkatti describes the minimal free resolutions of
$I(\Delta_{\lambda})$ in terms of ${\rm SL}_2$-modules 
for $\lambda = (3,2)$  in \cite[\S 4.1]{Chi1}
and for $\lambda = (3,3)$ in \cite[\S 4.2]{Chi1}.
He discusses  the
$364$ sextics for $\lambda = (3,3,2)$ in  \cite[\S 3.1]{Chi1}.

The ideal $I((\Delta_\lambda)^\vee)$ of the dual variety
has been studied only in two cases, namely
when $\lambda$ is a hook (by self-duality) and when
$\lambda$ has only parts $1$ and $2$. In the
latter case, it is generated by minors of a Hankel matrix.
In all other cases, little seems to be known.
Of course,  $I((\Delta_\lambda)^\vee)$ 
will often be principal (when $m_1 = 0$), and we seek to
find the generator of such an ideal explicitly.
This is accomplished for some interesting cases in Section 4.

\bigskip \bigskip

\section{Real Rank Boundaries}

In this section we present an application of our duality theory 
to the study of real ranks of binary forms \cite{Ble, CO}.
Let $f \in \RR[x,y]_n$ be a general binary form of degree $n$
with real coefficients. The complex rank of $f$ equals
$r=  \lceil (n+1)/2 \rceil$. This means that $f$ is a sum of $r$
powers of linear forms over $\CC$, but not fewer.
We consider the set $\mathcal{R}_n$ 
of all real binary forms $f$ that admit a rank $r$ decomposition also
over $\RR$. In other words, $\mathcal{R}_n$ is the set
of all forms $f \in \RR[x,y]_n$ such that both the real rank of $f$ 
and the complex rank of $f$ are equal to~$r$.

The set $\mathcal{R}_n$ is a full-dimensional
semi-algebraic set inside $\RR[x,y]_n$. 
Its {\em topological boundary} $\partial \mathcal{R}_n$ is 
the set-theoretic difference of the closure of $\mathcal{R}_n$
minus the interior of the closure of $\mathcal{R}_n$.
Thus, if $ f \in \partial \mathcal{R}_n$ 
then every open neighborhood of $f$ 
contains a generic form of real rank equal to  $r$
and also a generic form of real rank bigger than $r$.
The topological boundary $\partial \mathcal{R}_n$
is a semi-algebraic subset of pure codimension one
inside the real affine space $\RR[x,y]_n$. 

We define the {\em real rank boundary}, denoted
$\partial_{\rm alg}(\mathcal{R}_n)$, to be the Zariski closure
of the topological boundary $\partial \mathcal{R}_n$. We view
$\partial_{\rm alg}(\mathcal{R}_n)$ as a
closed subvariety  in the complex projective space $\PP(\CC[x,y]_n) = \PP^n$.
It is pure codimension one, so it is defined by a unique
(up to scaling) squarefree polynomial in
the coordinates $a_0,a_1,\ldots,a_n$ on $\PP^n$.
We compute that polynomial:

\begin{theorem} \label{thm:realrank}
Let $n \geq 5$.
If $n = 2k-1$ is odd then the real rank boundary $\partial_{\rm alg}(\mathcal{R}_n)$ is
an irreducible hypersurface of degree $2k(k-1)$ in $\PP^n$. This hypersurface is
the dual $(\Delta_\lambda)^\vee$ of the
multiple root locus $\Delta_\lambda$ where $\lambda = \{2^{k-2},  3\}$. Its
defining polynomial is the discriminant of 
\begin{equation}
\label{eq:qpoly}
 q(u,v)\, \,\, = \,\,\, {\rm det} \begin{pmatrix}
\, u^k & u^{k-1} v & \cdots & u v^{k-1} & v^k \\
\, a_0 & a_1 & \cdots & a_{k-1} & a_k \\
\, a_1 & a_2 & \cdots & a_{k} & a_{k+1} \\
\, a_2 & a_3 & \cdots & a_{k+1} & a_{k+2} \\
\, \vdots  & \vdots & \ddots & \vdots & \vdots \\
\, a_{k-1} & a_{k} & \cdots  & a_{n-1} & a_n 
\end{pmatrix} .
\end{equation}
If $n = 2k$ is even then the real rank boundary $\partial_{\rm alg}(\mathcal{R}_n)$ is
the union of the two irreducible hypersurfaces $(\Delta_\lambda)^\vee$
where $\lambda$ is $\{2^{k-3},3^2\} $ or $\{2^{k-2},4\}  $.
Their degrees are $2k(k-1)(k-2)$ and $3k(k-1)$.
These arise as irreducible
factors of the Hurwitz form of the discriminant $\Delta_{\{1^{k-1},2\}}$ 
when evaluated at the line in $\PP(\RR[x,y]_{k+1})$ that is the
 kernel of the  $k {\times} (k{+}2)$-matrix
\begin{equation}
\label{eq:3by5matrix}
\begin{pmatrix}
a_0 & a_1 & \cdots & a_{k} & a_{k+1} \\
a_1 & a_2 & \cdots & a_{k+1} & a_{k+2} \\
\vdots  & \vdots & \ddots & \vdots & \vdots \\
a_{k-1} & a_{k} & \cdots  & a_{n-1} & a_n 
\end{pmatrix} .
\end{equation}
% BS
A third irreducible factor appearing in this specialized Hurwitz form is
the Hankel determinant $(\Delta_{\{2^k\}})^\vee$, 
which has degree $k+1$, but this
 is not part of the real rank boundary  $\partial_{\rm alg}(\mathcal{R}_n)$.
\end{theorem}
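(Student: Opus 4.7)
The plan is to combine Sylvester's apolarity-based rank decomposition for binary forms with the duality theory of Section~2. For a general binary form $f$, its apolar ideal $f^\perp \subset \CC[u,v]$ has low-degree generators whose roots (in the single-generator case) or pencils of generators (in the two-generator case) encode the rank decompositions of $f$. The real rank equals the complex rank $r$ precisely when one can select such a generator with all $r$ roots real and distinct, so the real rank boundary is where this marginal property just fails, and I would identify this boundary with a dual multiple root locus via Corollary~\ref{main1}.

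For the odd case $n = 2k-1$: by Sylvester's theorem, $f^\perp$ has a unique (up to scalar) generator $q(u,v)$ of degree $k$, given precisely by the Catalecticant determinant in (\ref{eq:qpoly}), and the decomposition $f = \sum_{i=1}^k c_i \ell_i^n$ corresponds to the roots of $q$. Hence the real rank of $f$ equals $k$ iff $q$ has $k$ distinct real roots, and the boundary is the Zariski closure of $\{f : q_f \text{ has a real double root}\}$, which is cut out by the discriminant of $q_f$. The degree is $2(k-1)\cdot k = 2k(k-1)$: the binary discriminant has degree $2(k-1)$ in the coefficients of $q_f$, each of which is a $k\times k$ minor of the Hankel matrix, and so of degree $k$ in the $a_i$. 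To identify this hypersurface with $(\Delta_\lambda)^\vee$ for $\lambda = \{2^{k-2}, 3\}$, apply Corollary~\ref{main1}: a form $g$ lies in this dual iff it is annihilated by an operator $\ell_1\cdots \ell_{k-2}\cdot \ell_{k-1}^2$, i.e.\ by a degree-$k$ form with a double root, which for generic $g$ is equivalent to $q_g$ having a double root. Oeding's formula (\ref{eq:oeding}) independently yields the same degree $2k(k-1)$, confirming irreducibility and ruling out extra components.

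For the even case $n = 2k$: now $f^\perp$ is generated by two degree-$(k+1)$ forms spanning a pencil $\mathcal{P}_f := \PP(\ker M) \subset \PP^{k+1}$, where $M$ is the Hankel matrix (\ref{eq:3by5matrix}). Sylvester gives real rank $k+1$ iff $\mathcal{P}_f$ contains a form with $k+1$ distinct real roots, and tracking real roots as one moves along the pencil shows that this happens iff all $2k$ intersection points of $\mathcal{P}_f$ with the discriminant hypersurface $\Delta := \Delta_{\{1^{k-1},2\}}\subset \PP^{k+1}$ are real. The boundary therefore occurs exactly when two such intersection points coalesce over $\RR$, i.e.\ when $\mathcal{P}_f$ is tangent (over $\RR$) to $\Delta$. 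Tangency of a line to $\Delta$ is detected by the Hurwitz form of $\Delta$ \cite{Stu, GKZ}, and pulling this Hurwitz form back along the map $f\mapsto \mathcal{P}_f$ produces a polynomial in $a_0,\dots,a_n$. I would analyze its factorization by stratifying the tangency condition: either $\mathcal{P}_f$ passes through the singular locus of $\Delta$ (whose codimension-one strata are forms with two double roots $\Delta_{\{1^{k-3},2^2\}}$ and forms with a triple root $\Delta_{\{1^{k-2},3\}}$), or the matrix $M$ itself drops rank (forcing $f$ into the secant variety $\sigma_k(\Delta_{(n)})$, i.e.\ making the Hankel determinant vanish). Corollary~\ref{main1} identifies the first two pieces with $(\Delta_{\{2^{k-3},3^2\}})^\vee$ and $(\Delta_{\{2^{k-2},4\}})^\vee$, and Oeding's formula (\ref{eq:oeding}) yields the degrees $2k(k-1)(k-2)$ and $3k(k-1)$. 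The third factor $(\Delta_{\{2^k\}})^\vee$ of degree $k+1$ is spurious: on its vanishing locus $f$ has complex rank $\leq k < k+1$, so $f$ lies in the closure of strictly lower-rank forms rather than on the generic real rank boundary.

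The hardest step will be in the even case: to show rigorously that the specialized Hurwitz form has exactly these three irreducible factors with the correct multiplicities, without performing an intractable determinantal computation. This requires both a clean description of how the tangent-to-$\Delta$ divisor on $\mathrm{Gr}(1,k+1)$ pulls back under $f\mapsto \mathcal{P}_f$, and a careful real-topological argument that every real tangency genuinely realizes a transition in real rank (not merely an algebraic tangency without a nearby real deformation). I would corroborate the claim for small $k$ (say $k=3,4$) by direct computation and by cross-referencing the analyses of \cite{Ble, CO}.
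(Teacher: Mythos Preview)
Your odd-case argument is correct and coincides with the paper's.

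In the even case your key characterization is wrong: it is \emph{not} true that $\mathcal{P}_f$ contains a real-rooted form if and only if all $2k$ intersection points of $\mathcal{P}_f$ with $\Delta$ are real. Already for quartics ($k=3$) a generic pencil can meet the sextic discriminant in six real points while the number of real roots oscillates only between $0$ and $2$, never reaching $4$. The paper uses instead the direct criterion that $f \in \mathcal{R}_n$ iff the line $L_f$ meets the (connected, full-dimensional) set of real-rooted forms in $\PP^{k+1}_\RR$, and then argues that at a transition $L_f$ must become tangent to the closure of that set: either tangent to $\delta$ at a smooth point, or passing through the cusp stratum $\kappa = \Delta_{\{1^{k-2},3\}}$, or through the node stratum $\nu = \Delta_{\{1^{k-3},2^2\}}$. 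This yields the factorization $\mathrm{Hur}(\delta) = \mathrm{Chow}(\kappa)^3 \cdot \mathrm{Chow}(\nu)^2 \cdot \mathrm{Tan}$, with exponents coming from Pl\"ucker's formula.

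Your handling of the third factor is also incomplete. You equate it with the locus where the Hankel matrix $M$ drops rank, but a priori the smooth-tangency factor $\mathrm{Tan}$ is a different hypersurface in the Grassmannian; the paper proves the nontrivial identity $\mathrm{Tan}(L_f) = \bigl((\Delta_{\{2^k\}})^\vee\bigr)^k$ by observing that when $\mathrm{rank}_\CC(f)=k$ the line $L_f$ is spanned by $u\cdot g$ and $v\cdot g$ for the degree-$k$ annihilator $g$, hence tangent to $\delta$ at each of the $k$ roots of $g$. Finally, your exclusion of the Hankel factor (``$f$ has complex rank $\leq k$, so lies in the closure of lower-rank forms'') does not by itself rule out boundary behavior; the paper instead perturbs such $f$ by $\pm\epsilon\,\ell_0^n$ and uses the Matrix Determinant Lemma to place both perturbations inside $\mathcal{R}_n$. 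You are right that exhibiting witnesses on the two genuine components is the subtlest remaining step; the paper does this with explicitly constructed pencils $L_\kappa$ and $L_\nu$.
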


For the relevant background on Chow forms and Hurwitz forms
we refer to \cite{GKZ} and \cite{Stu}.
Before presenting the proof of Theorem \ref{thm:realrank}, 
here is an illustration of the first three cases.
 
\begin{example}[Little Apple and Big Apple] \label{ex:apple} \rm
Let $n=5$, so $k=3$ and $\lambda = (3,2)$.
In this first case, Theorem~\ref{thm:realrank} was proved by
Comon and Ottaviani in \cite[\S 5]{CO}.
The polynomial defining the hypersurface  $(\Delta_\lambda)^{\vee} \subset \PP^5$
is their {\em apple invariant} $I_{12}$. It has
 $228$ terms of degree $12$.
 
 The next odd case is $n=7$, so $k=4$ and $ \lambda = (3,2,2)$.
 The real rank boundary  $(\Delta_\lambda)^{\vee}$  is~a hypersurface of
 degree $24$ in $ \PP^7$, namely the join of the surface
 $\Delta_{(6,1)}$ and the threefold $\sigma_2(\Delta_{(7)}) = \sigma_2 (\nu_7(\PP^1))$.
 Its defining polynomial is computed  via (\ref{eq:qpoly}). It  has $38082$ terms.
 \hfill $\diamondsuit$
\end{example}

\begin{example}[Chow and Hurwitz forms]
\label{ex:itedis} \rm
Let $n=6$. We consider the discriminant $\Delta_{(2,1,1)} $ of
binary forms of degree $k+1 = 4$. This discriminant is a threefold of
degree $6$ in $\PP^4$. Its singular 
 locus consists of the surfaces $\Delta_{(3,1)}$ and $\Delta_{(2,2)}$.
The {\em Hurwitz form} of $\Delta_{(2,1,1)} $ is a polynomial
of degree $30 = 3 \cdot {\bf 6} + 2 \cdot {\bf 4} + {\bf 4}$ in the ten Pl\"ucker coordinates
on the Grassmannian of lines in $\PP^4$. 
It is reducible because $\Delta_{(2,1,1)}$ is singular in codimension $1$.
It factors~as 
\begin{equation} 
\label{eq:factor321}
{\rm Hur}(\Delta_{(2,1,1)}) \, = \,
{\rm Chow}(\Delta_{(3,1)})^3 \cdot 
{\rm Chow}(\Delta_{(2,2)})^2 \cdot {\rm Tan}.
\end{equation}
Here ${\rm Chow}(X)$ denotes the {\em Chow form} of a surface $X$ in $\PP^4$,
i.e.~the polynomial in Pl\"ucker coordinates that vanishes when
 the line intersects $X$. The degree of the Chow form
 equals the degree of $X$, so it is $6$ and $4$ in our two cases.
  The last factor ${\rm Tan}$ is the
 condition for the line to  be tangent at a smooth point of $\Delta_{(2,1,1)}$.
 This is a quartic in the Pl\"ucker coordinates.
 
 We now take our line in $\PP^4$ to be the kernel of the $ 3 \times 5$ matrix in (\ref{eq:3by5matrix}).
 More precisely, in the formula (\ref{eq:factor321})
 we substitute the $10$ maximal minors of (\ref{eq:3by5matrix})
 for the $10$ Pl\"ucker coordinates. 
 Here signs have to be taken into consideration carefully.
 Each maximal minor is a cubic, so the degrees above have to be tripled.
 Equation (\ref{eq:factor321}) now has degree 
 $90 = 3 \cdot {\bf 18} + 2 \cdot {\bf 12} + 3 \cdot {\bf 4}$.

The polynomial obtained from ${\rm Chow}(\Delta_{(3,1)})$
is $(\Delta_{(4,2)})^\vee$. It has $3140$ terms of degree~$18$.
The polynomial obtained from ${\rm Chow}(\Delta_{(2,2)})$
equals $(\Delta_{(3,3)})^\vee$. It has $560$ terms of degree~$12$.
The degree $12$ polynomial obtained from ${\rm Tan}$ is
the third power of the Hankel determinant
\begin{equation}
\label{eq:hankel44} (\Delta_{(2,2,2)})^\vee  \,\, = \,\,\, 
\begin{small}
{\rm det} \begin{pmatrix} 
a_0 & a_1 & a_2 & a_3 \\
a_1 & a_2 & a_3 & a_4 \\
a_2 & a_3 & a_4 & a_5 \\
a_3 & a_4 & a_5 & a_6 
\end{pmatrix}.
\end{small}
\end{equation}
To illustrate the statement of Theorem~\ref{thm:realrank},
we present two explicit forms that lie in $\partial \mathcal{R}_6$.

The binary form $\,  f = y^6 + 15x^4y^2 \,$ lies
in $\,\partial \mathcal{R}_6 \cap (\Delta_{(4,2)})^\vee\,$ but not in
 $\,(\Delta_{(3,3)})^\vee \cup (\Delta_{(2,2,2)})^\vee$.
Its line of apolar quartics is $\, L_f   =  \bigl\{
  s \cdot u v^3 \,+\, t \cdot  (u^4 - v^4) \,|\,   (s:t) \in \PP^1 \bigr\} $. This
  has discriminant  $  (27s^4 + 256t^4)  t^2$, with only
  real root at $ (s:t) = (1:0)$. That quartic lies in
  $\kappa = \Delta_{(3,1)}$, and it is a limit of
    quartics with four real roots, and also a limit 
  of  quartics with two real roots.
  From this we can construct generic sextics $f_{\pm \epsilon}$ close to $f$
  whose real ranks are $4$ and $5$.
  %last check: is it really 5 // Hwangrae : checked
  
The binary form  $\,f = y^6 + 5x^2y^4 - 5x^4y^2 - x^6\,$ lies
in $\,\partial \mathcal{R}_6 \cap (\Delta_{(3,3)})^\vee\,$ but not in
 $\,(\Delta_{(4,2)})^\vee \cup (\Delta_{(2,2,2)})^\vee$.
Its line of apolar quartics is $\, L_f   =  \bigl\{
  s \cdot (u-v)^2(u+v)^2  \,+ \,t \cdot uv(u^2+v^2)
  \,|\,   (s:t) \in \PP^1 \bigr\} $. This
  has discriminant  $    (16s^2 + t^2)^2  t^2$, with only
  real root  $ (s:t) = (1:0)$. That quartic lies in
  $\nu = \Delta_{(2,2)}$. We can
    construct nearby generic sextics $f_{\pm \epsilon}$ 
  whose real ranks are  $4$ and $5$.
  
The proof below will explain the
      relevance of $L_f$,  $\kappa$, and $\nu$ for $\partial \mathcal{R}_6$.
        It will also show why the Hankel determinant  $(\Delta_{(2,2,2)})^\vee$
        meets the boundary of $\mathcal{R}_6$ only in lower dimension.
                \hfill $\diamondsuit$
\end{example}

\begin{remark} \rm
It is instructive to see the factorization 
(\ref{eq:factor321}) using {\em iterative discriminants}.
Let $A(x)$ and $B(x)$ be univariate quartics, and consider
$A(x) + tB(x)$ where $t$ is an unknown.~Then
$$ {\rm discr}_t \bigl( {\rm discr}_x ( A(x) + t B(x) ) \bigr) \,\, = \,\,
\bigl( \,[6,6]_{A,B}\, \bigr)^3 \cdot
\bigl( \,[4,4]_{A,B} \,\bigr)^2 \cdot
\bigl( \,[4,4]_{A,B} \,\bigr) ,
$$
where $[e,e]_{A,B}$ stands for a big
expression that has degree $e$ in the coefficients
of $A$ and of $B$.
\end{remark}

\begin{proof}[Proof of Theorem \ref{thm:realrank}]
We write $f$ for a generic form in $\RR[x,y]_n$.
We first consider the odd case $n = 2k-1$.
The apolar ideal, consisting of all forms in  $\RR[u,v]$ such that
$g\bigl(\frac{\partial}{\partial x},\frac{\partial}{\partial y}\bigr)$ annihilates
$f(x,y)$, is generated by forms $q(u,v)$ of degree $k$ and $r(u,v)$ of degree $k+1$.
Furthermore, the dual form $q(u,v)$ has the Hankel determinantal representation in  (\ref{eq:qpoly}).

Since $f$ is generic, there exists
a unique decomposition $\,f(x,y) = \sum_{i=1}^k (s_i x + t_i y)^n$.
The points $(s_i:t_i)$ are the complex roots of $q$. Hence
$f$ lies in $\mathcal{R}_n$ precisely when $q$ is real-rooted.
By {\em real-rooted} we mean that $q$ is square-free
and its roots are all real.

Suppose now that the form $f$ moves and
passes through the boundary of $ \mathcal{R}_n$.
Then two real roots of $q(u,v)$ merge and become a double root.
At this point, the discriminant of $q(u,v)$ vanishes.
Corollary \ref{main1} implies that this discriminant equals $(\Delta_\lambda)^\vee$
where $\lambda = \{2^{k-2},3\}$.

Next consider the even case $n = 2k$.
The apolar ideal of $f$ is generated by two forms
of degree $k+1$. Let $L_f$ be the line in $\PP^{k+1}$
spanned by these two forms. The points $q$ on $L_f$ 
correspond to the distinct decompositions 
$\,f(x,y) = \sum_{i=1}^{k+1} (s_i x + t_i y)^n$. Namely,
the $(s_i:t_i)$ are the roots of $q$.
This means that $f$ lies in $\mathcal{R}_n$ if and only 
if some $q$ in $L_f$ is real-rooted.

The set of all real-rooted forms is a connected 
full-dimensional semi-algebraic subset of $\PP^{k+1}_\RR$,
and its algebraic boundary is the discriminant 
$\delta = \Delta_{\{1^{k-1},2\}}$.
Note that $\delta$ is a hypersurface of degree $2k$.
Its singular locus consists of the codimension
two loci $\kappa = \Delta_{\{1^{k-2},3\}}$ and
$\nu = \Delta_{\{1^{k-3},2^2\}}$.
Their degrees are ${\rm deg}(\kappa) = 3(k-1)$ and
${\rm deg}(\nu) = 2 (k-1)(k-2)$.

Suppose that the form $f$ moves along a general curve.
Consider its image under the map $f \mapsto L_f$
into the Grassmannian of lines in $\PP^{k+1}_\RR$,
here denoted ${\rm Gr}$.
As $f$ crosses the boundary of $\mathcal{R}_n$,
the line $L_f$ transitions from intersecting 
to not intersecting the subset of real-rooted forms.
At the transition point, the form $f$ is in $\partial_{\rm alg}(\mathcal{R}_n)$.
One of the following three scenarios might happen:
(i) $L_f$ intersects $\kappa$, or (ii) $L_f$ intersects $\nu$,
or (iii) $L_f$ is tangent to $\delta$ at a smooth point.
Each of these conditions defines an irreducible
hypersurface in the Grassmannian ${\rm Gr}$.
Their equations are the irreducible factors
in the Hurwitz form ${\rm Hur}(\delta)$.

The Hurwitz form of the discriminant factors as follows in the
coordinate ring of ${\rm Gr}$:
\begin{equation}
\label{eq:HurChowChow} {\rm Hur}(\delta) \,\, = \,\, {\rm Chow}(\kappa)^3 \cdot {\rm Chow}(\nu)^2 \cdot
{\rm Tan}(\delta). 
\end{equation}
The exponents $3$ and $2$ arise from the
classical Pl\"ucker formula
for the dual of a plane curve.
Now, since $\delta$ is a hypersurface of degree $2k$, its
Hurwitz form has degree 
$2k(2k-1)$.
Hence
$$ 2k(2k-1) \,\, = \,\,
3 {\rm deg}(\kappa) \,+\,
 2 {\rm deg}(\nu)\,+ \, {\rm deg}\bigl({\rm Tan}(\delta)\bigr)\,\, = \,\,
 9(k-1) \,+\,4(k-1)(k-2) \,+ \, {\rm deg}\bigl({\rm Tan}(\delta)\bigr). $$
 This uses the fact that the degree of a Chow form
 in Pl\"ucker coordinates equals the degree of its variety.
 We conclude that ${\rm Tan}(\delta)$ is a polynomial
 of degree $k+1$ in Pl\"ucker coordinates.
 
 The map $\PP^n \dashrightarrow {\rm Gr}, \,f \mapsto L_f$ is birational.
 Indeed, every generic line has the form $L_f$ for a
 unique sextic $f$ that is recovered by solving the
 differential equations represented by $L_f$.
Moreover, the base locus of this inverse map is precisely
 the tangential hypersurface ${\rm Tan}(\delta)$.
   
 We now examine what happens to the three irreducible factors
 in (\ref{eq:HurChowChow}) when pulled back under the map
$\PP^n \dashrightarrow {\rm Gr}$.
 Algebraically, the line $L_f$ is  given as the kernel of
 the matrix (\ref{eq:3by5matrix}). So, to compute the pullback
 of (\ref{eq:HurChowChow}), we need to 
 replace the Pl\"ucker coordinates by the
 (appropriately signed and scaled) maximal minors of
 (\ref{eq:3by5matrix}). These minors are polynomials
 of degree $k$, so we must multiply the above degrees by $k$.
This gives the degrees $\, 3k(k-1)$, $\,2 k(k-1)(k-2) \,$ and
$\,k(k+1)\,$ for the pullbacks of the three irreducible factors in
(\ref{eq:HurChowChow}).

The first two pullbacks are irreducible as polynomials in 
$\RR[{\bf a}] = \RR[a_0,\ldots,a_n]$. They are
\begin{equation}
\label{eq:bothchow} \begin{matrix}
 {\rm Chow}(\kappa) (L_f) \,\,= \,\,(\Delta_{ \{2^{k-2},4\}})^\vee 
 \quad \hbox{and} \quad
 {\rm Chow}(\nu) (L_f) \,\,= \,\,  (\Delta_{ \{2^{k-3},3^2\}})^\vee.
\end{matrix}
\end{equation}
As before, this follows from the description of the
  dual hypersurfaces in Corollary \ref{main1}.
  
The third factor in (\ref{eq:HurChowChow}) 
becomes a reducible polynomial in
 $\RR[{\bf a}]$. Namely, we have
\begin{equation}
\label{eq:TanDelta}
  {\rm Tan} (L_f) \,\, = \,\, \bigl( \,(\Delta_{\{2^k\}})^\vee\, \bigr)^k . 
\end{equation}
 In words: the pullback of ${\rm Tan}$ is the
 $k$th power of the Hankel determinant of order $k+1$.
 
 This can be seen as follows.
 A general point $f$ on the Hankel hypersurface satisfies  ${\rm rank}_\CC(f) = k $.
 It is annihilated by some binary form $g(u,v) $ of degree $k$, 
 and the line $L_f$ is spanned by
 $u \cdot g(u,v)$ and $v \cdot g(u,v)$. This means that
 $L_f$ is tangent to $\delta$ at $k$ points over $\CC$, given by the $k$ roots of  $g(u,v)$.
We see that the pullback  ${\rm Tan} (L_f) $
contains $(\Delta_{\{2^k\}})^\vee$ set-theoretically.
 Both are irreducible varieties, and hence they are equal as sets in $\PP^n$.
  By comparing degrees, we conclude that
 (\ref{eq:TanDelta}) holds. 
 
 Our argument also shows the following fact:
 if a line of the form $L_f$
is tangent to the discriminant $\delta$ at one smooth point
then it is tangent to $\delta$ at a scheme of length $k$.

We have proved that $\partial_{\rm alg}(\mathcal{R}_n)$ has
at most three irreducible factors. However, the correct number
is two. The two hypersurfaces that appear in 
$\partial_{\rm alg}(\mathcal{R}_n)$ are those in (\ref{eq:bothchow}).
 This was shown already for $n=6$. Towards the end of Example~\ref{ex:itedis},
 we exhibited one binary form $f$ for each of these two boundary strata.
 In general, a construction can be made as follows.
 
 We consider lines in the space $\PP^{k+1}$ of
 binary forms of degree $k+1$ that look like
 $$ \begin{matrix}
  L_\kappa  & = & \bigl\{ s \cdot u v^3 f(u,v) \,+ \, t \cdot (u^2+v^2) g(u,v) \,\,| \,\,
 (s:t) \in \PP^1 \bigr\}  \qquad \quad \hbox{and} \\
   L_\nu & = & \bigl\{ s \cdot (u-v)^2 (u+v)^2 f(u,v) \,+ \, t \cdot (u^2+v^2) g(u,v) \,\,| \,\,
 (s:t) \in \PP^1 \bigr\}  ,
 \end{matrix}
 $$
 where $f$ and $g$ are generic real-rooted binary forms
 of degrees $ k-3$ and $k-1$ respectively.
 The line $L_\kappa$ meets the discriminant $\delta$
 only in the cusp locus, and the line $L_\nu$
 meets $\delta$ only in the node locus.
 The binary form has  $k-1$ distinct real roots at these
 intersection points.  Now, for suitable choices  of $f$ and $g$, the
 discriminant of the pencil of binary forms has 
 no real roots other than $(s:t) = (1:0)$.
 We fix such $f$ and $g$, and we set
  $L =  L_\kappa$ or $L = L_\nu$.

     The pencil $L$
 consists of binary forms of degree $k+1$ that have
 precisely $k-1$ real roots.
Moreover, $L$ is not contained in the hypersurface ${\rm Tan}$.
Recall that the inverse to $ \PP^n \dashrightarrow {\rm Gr}$ is well-defined
 in a neighborbood $\mathcal{U}$ of $L$.
Hence, for each $U \in \mathcal{U}$ there exists
a unique form $f_U$ of degree $n$ such that
$L_{f_U} = U$. By construction, there exist generic points
$U_{\pm \epsilon}$ in $\mathcal{U}$ such that
$U_{+\epsilon}$ contains a real-rooted form,
and $U_{-\epsilon}$ contains no real-rooted form.
Then $f_{+\epsilon} = f_{U_{+\epsilon}}$  has real rank $k+1$ while 
$f_{-\epsilon} = f_{U_{-\epsilon}}$  has real rank $\geq k+2$.
We conclude that $f$ is in $\partial \mathcal{R}_n$.

It remains to be seen that the Hankel determinant $\Delta_{\{2^k\}}$ 
is not a factor of the real rank boundary $\partial_{\rm alg}(\mathcal{R}_n)$.
The proof is by contradiction. Suppose that
$f \in \partial \mathcal{R}_n$ has ${\rm rank}_\CC(f) = k$, and write
$f = \sum_{i=1}^k \ell_i^n$. The $\ell_i$
are linear forms over $\CC$, and these are unique. We can approximate $f$ by a sequence
of forms in $\mathcal{R}_n$. Each is a sum of $k+1$
powers of {\em real} linear forms. A convergence argument shows
that $\ell_1,\ldots,\ell_k$ must have
real coefficients as well.

Fix a generic real linear form $\ell_0$ and consider
$f + \epsilon \ell_0^n$ and $f - \epsilon \ell_0^n$ where $\epsilon > 0$ is small.
The square Hankel matrices corresponding to these two forms
are invertible. Their determinants have opposite signs. This can be seen from the
 Matrix Determinant Lemma. The two forms lie on different sides of the
Hankel hypersurface. Both have real rank $k+1$, and hence 
both lie in $\mathcal{R}_n$.
 This contradicts our assumption that $f$ is in the 
 topological boundary of $\mathcal{R}_n$.
\end{proof}

\medskip

\begin{remark} \rm
The real projective space $\PP^n_\RR$ of binary forms of degree $n$
is stratified by real rank. Let $r=  \lceil (n+1)/2 \rceil$.
Blekherman \cite{Ble} showed that each integer in $\{r,r+1,\ldots,n-1,n\}$ 
arises as the real rank of some open stratum.
It would be very interesting to determine the algebraic boundary
that separates real rank $i$ from real rank $i+1$, for any 
$i \in \{r,\ldots,n-1\}$. Currently, only the two extreme cases
are known. The algebraic boundary for $i = n-1$ is the discriminant
$\Delta_{\{1^{n-2},2\}}$, by \cite[Prop.~3.1]{CO}, and
the case $i=r$ is resolved by Theorem \ref{thm:realrank}.
\end{remark}

\section{Euclidean Distance Degrees}

This section is concerned with the following
optimization problem over the real field $K = \RR$.
Let  $h \in \RR[x,y]_n$ be a fixed binary form.
We seek $f \in \Delta_\lambda$ that is closest to $h$.
In symbols:
\begin{equation}
\label{eq:minimize}
 {\rm minimize} \,\,\langle f - h ,f -h \rangle
\,\,\hbox{ subject to} \,\, f \in \Delta_\lambda . \end{equation}
We call this the {\em special ED problem}
when the inner product is as in (\ref{eq:innerprod1})
and (\ref{eq:innerprod2}). Alternatively, we may also use
a positive definite quadratic form that is generic. The
resulting scenario  (\ref{eq:minimize})
is the {\em generic ED problem} for $\Delta_\lambda$.
For instance, if we replace (\ref{eq:innerprod2}) with
$\,\langle f, g\rangle \,= \, \sum_{i=0}^n \gamma_i a_i b_i$,
where $\gamma_0,\gamma_1,\ldots,\gamma_n$ are random positive reals,
then this leads to a generic ED problem.

The map that takes the given input $h$ to the optimal 
solution $f^* = f^*(h)$ of the problem (\ref{eq:minimize}) is an algebraic function.
The degree of that algebraic function is known as the {\em ED degree}
of the variety $\Delta_\lambda$. For an introduction to this topic
and its basic results see \cite{DHOST}. A follow-up study,
 aimed at varieties that admit a determinantal representation,
 was undertaken in~\cite{OSS}.
 
 In the algebraic approach to solving (\ref{eq:minimize}) one
 writes the critical equations using Lagrange multipliers
 and one removes the singular locus. We shall see concrete examples
 for $\Delta_\lambda$ shortly.
 The ED degree is the number of complex solutions to these
 equations for generic data $h$. Of course, the optimal
 point $f^*$ is real and it is among these complex critical points.
 
Varieties such as $\Delta_\lambda$ come with a natural invariant coordinate system, and
our special inner product (\ref{eq:innerprod1})-(\ref{eq:innerprod2})
gives the standard Euclidean distance for that coordinate system.
The corresponding ED degree is the {\em special ED degree} of $\Delta_\lambda$.
By contrast, the {\em generic ED degree} of $\Delta_\lambda$ is usually larger.
This is the degree for the generic ED problem described above.

In the last column of Table \ref{tab:eins}
we list the special ED degree and the generic ED degree.
The first pair in each box, corresponding to
the rational normal curve $\Delta_{(n)}$, equals $\,n,\,3n-2$.
These numbers were derived, for arbitrary $n$, in \cite[Example 5.12]{DHOST} 
and \cite[Corollary 8.7]{DHOST}. Both notions of ED degrees
are preserved under duality \cite[Theorem 5.2]{DHOST}, and the generic
ED degree coincides with the sum of the
polar classes, $\mathcal{C}_\lambda(1,1) = \sum \delta_i$,
by \cite[Theorem 5.4]{DHOST}.

Some of the ED degrees in Table \ref{tab:eins}
appear in \cite{OSS}. For instance, the
ED degrees $7,13$ for $\lambda = (2,2)$
appear in \cite[Example 1.1]{OSS}. When $\lambda$ 
is dual to a collections of hooks
of the form $n,n$ or $n,n,n$, the variety
$(\Delta_\lambda)^\vee$  is given by Hankel
matrices of rank $2$ or $3$. The corresponding
ED degrees in Table \ref{tab:eins} are found 
in \cite[Table 4]{OSS}. Therein, the left table for $\Lambda = \Omega_n$
gives generic ED degree, while the right table
for $\Lambda = \Theta_n$ gives special ED degree.

From Table~\ref{tab:eins} we can guess the two ED degrees for the
variety of binary forms of degree $n$ that have a root of multiplicity $a$.
Our next goal is to prove that this guess is correct.

\begin{theorem} \label{thm:itisn}
For any hook $\lambda = \{1^{n-a}, a\}$, 
the special ED degree of the variety $\Delta_\lambda$ is always $n$, independently of $a$, 
whereas the generic ED degree of $\Delta_\lambda$ equals $(2a-1)n-2(a-1)^2$.
\end{theorem}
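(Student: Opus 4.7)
My strategy is to dispatch the two ED degrees separately, in each case reducing to results established earlier in the paper.

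For the generic ED degree, the plan is to invoke the identity $\mathcal{C}_\lambda(1,1) = \sum_i \delta_i$ from \cite[Theorem~5.4]{DHOST}, together with the observation that for hooks only two polar classes can be nonzero. Indeed, for $\lambda = \{1^{n-a},a\}$ one has $\operatorname{codim}(\Delta_\lambda) = a-1$ and, via Theorem~\ref{thm:neun}, $\dim((\Delta_\lambda)^\vee) = a-1$. The vanishing criterion $\delta_i = 0$ for $i < \operatorname{codim}(\Delta_\lambda)$ and for $i > \dim((\Delta_\lambda)^\vee)+1$ then pins the nonzero polar classes to $i \in \{a-1,a\}$, with $\delta_{a-1} = \deg(\Delta_\lambda)$ and $\delta_a = \deg((\Delta_\lambda)^\vee)$ as the leading and trailing terms of the multidegree. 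Hilbert's formula \eqref{eq:hilbert} applied to $\lambda$ gives $\delta_{a-1} = a(n-a+1)$, and the same formula applied to the dual hook $\{1^{a-2},n-a+2\}$ (from Corollary~\ref{cor:selfdual}) gives $\delta_a = (a-1)(n-a+2)$. Their sum simplifies to $(2a-1)n - 2(a-1)^2$, as claimed.

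For the special ED degree, I would write out the Lagrange system directly. A smooth point $f = \ell^a m \in \Delta_\lambda$ is critical for $\|\cdot - h\|^2$ precisely when $f - h \in (T_f\Delta_\lambda)^\perp$. By Lemma~\ref{lem:eins}, $T_f\Delta_\lambda = \ell^{a-1}\cdot K[x,y]_{n-a+1}$. By Lemma~\ref{lem:zwei}, its apolar perpendicular consists of binary forms killed by $\ell^{a-1}(\partial_u,\partial_v)$; rotating coordinates so that $\ell$ becomes a variable converts this condition into divisibility by $(\ell^\perp)^{n-a+2}$, where $\ell^\perp$ is the linear form apolar-orthogonal to $\ell$ (explicitly, $\ell^\perp = -tx+sy$ when $\ell = sx+ty$). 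The critical system then reads
\begin{equation}
\label{eq:critplanED}
h \,=\, \ell^a m \,-\, (\ell^\perp)^{n-a+2} r,\qquad (\ell,m,r) \,\in\, \PP^1 \times K[x,y]_{n-a} \times K[x,y]_{a-2}.
\end{equation}
For each fixed $\ell$, \eqref{eq:critplanED} is a linear system in $(m,r)$. The linear map $\phi_\ell\colon K[x,y]_{n-a}\oplus K[x,y]_{a-2}\to K[x,y]_n$ sending $(m,r)\mapsto \ell^a m - (\ell^\perp)^{n-a+2} r$ is injective for generic $\ell$ by coprimality of $\ell$ and $\ell^\perp$, and its image has dimension $(n-a+1)+(a-1) = n$, i.e.~a hyperplane in the $(n+1)$-dimensional space $K[x,y]_n$. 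A second application of Lemma~\ref{lem:zwei} identifies the line orthogonal to this hyperplane as $\ell^{a-1}K[x,y]_{n-a+1} \cap (\ell^\perp)^{n-a+1}K[x,y]_{a-1}$, which by coprimality is one-dimensional and spanned by the single form $v(\ell) := \ell^{a-1}(\ell^\perp)^{n-a+1}$. Thus the critical locus is cut out in $\PP^1$ by the scalar equation $\langle h, v(\ell)\rangle = 0$. Since $v(\ell)$ is homogeneous of degree $n$ in $(s:t)$, for generic $h$ this equation has exactly $n$ solutions, each determining $(m,r)$ uniquely via \eqref{eq:critplanED} and hence yielding a distinct critical point $f = \ell^a m$ on $\Delta_\lambda$. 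This proves that the special ED degree equals $n$.

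The main obstacle is the clean identification $(T_f\Delta_\lambda)^\perp = (\ell^\perp)^{n-a+2} K[x,y]_{a-2}$: while Lemma~\ref{lem:zwei} provides the differential-operator reformulation, one needs an explicit rotation (or equivalent apolarity computation) to convert ``annihilated by $\ell^{a-1}(\partial_u,\partial_v)$'' into ``divisible by $(\ell^\perp)^{n-a+2}$''. Once this is in hand, everything else reduces to elementary linear algebra and a coprimality count. Secondary care is needed to verify that for generic $h$ the $n$ roots in $\PP^1$ are distinct, avoid the locus where $\ell$ coincides with a factor of $m$, and land in the smooth locus of $\Delta_\lambda$; these are routine genericity checks.
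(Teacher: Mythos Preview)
Your treatment of the generic ED degree is identical to the paper's: both read off the two nonzero polar classes from Hilbert's formula and Corollary~\ref{cor:selfdual} and sum them.

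For the special ED degree, your route and the paper's diverge in presentation, though they compute the same object. You identify the orthogonal complement of $\operatorname{im}\phi_\ell$ directly as the line spanned by $v(\ell)=\ell^{a-1}(\ell^\perp)^{n-a+1}$, reducing everything to the single degree-$n$ condition $\langle h,v(\ell)\rangle=0$ in $(s:t)$. The paper instead introduces the differential operator $L^{(a-1)}$ and proves (Lemma~\ref{lem:keylemma}) that every $(s:t)$ in an orthogonal decomposition is a root of $L^{(a-1)}(h)$; this is established via the rotation identity $L^{(k)}(f)(x,y)=(-1)^{n-k}L^{(n-k)}(f(-y,x))$. Your form $\langle h,v(\ell)\rangle$ is, up to normalization, exactly $L^{(a-1)}(h)$, so you have rediscovered the paper's key invariant by a cleaner linear-algebra argument that bypasses the operator calculus. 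What the paper's formulation buys is an explicit formula for $L^{(k)}$ in coordinates and a direct link to tensor eigenvectors in the case $a=2$.

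The one place where you are lighter than the paper is the lower bound. You assert that ``for generic $h$ the $n$ roots in $\PP^1$ are distinct'' as a routine check, but the linear map $h\mapsto \langle h,v(\cdot)\rangle$ is \emph{not} surjective onto $K[s,t]_n$ (already for $n=2$, $a=2$ its kernel is spanned by $x^2+y^2$), so one cannot simply say that $P_h$ is a generic degree-$n$ form. The paper handles this by an explicit construction: it exhibits, for each $(n,a)$, a rotationally symmetric form $h_n$ (or $k_n$) admitting exactly $n$ distinct orthogonal decompositions, produced by applying the cyclic rotation $\rho$ of order $n$ to a single decomposition. Your deferred genericity check can be completed (for instance by showing $v(\ell)$ and $\partial_s v(\ell)$ are linearly independent at generic $\ell$, whence the double-root locus in $h$-space has codimension $\ge 1$), but it is not quite as automatic as you suggest, and the paper's constructive argument has the added virtue of supplying concrete test cases for Section~6.
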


\begin{proof}
We begin with the generic ED degree. It is the sum of the polar classes $\delta_i$.
The codimension $a-1$ of $\Delta_{\{1^{n-a},a\}}$ equals the dimension  of
its dual $(\Delta_{\{1^{n-a},a\}})^\vee =  \Delta_{\{ 1^{a-2},n-a+2 \}}$.
This means that precisely two polar classes are nonzero, and the generic
ED degree equals
\begin{equation}
\label{eq:aswritten}
 \delta_{a-1} + \delta_a \, = \,
{\rm deg}(\Delta_{ \{ 1^{n-a},a \} }) + 
{\rm deg}( \Delta_{ \{ 1^{a-2},n-a+2 \} }) \,=\,
a (n - a + 1) + (n - a + 2) (a - 1), \,
\end{equation}
by Hilbert's formula (\ref{eq:hilbert}).
This expression equals $(2a-1)n-2(a-1)^2$, as desired.

Next consider the special ED degree. Our goal is to compute the complex
critical points $f^*$ of the optimization problem (\ref{eq:minimize}) where $h$ is a 
fixed generic binary form in $V = \RR[x,y]_n$.
We now identify $V$ with its dual space $V^\vee$ by way of the
distinguished inner product (\ref{eq:innerprod1}), and we regard the
conormal variety ${\rm Con}_\lambda$
as an affine cone of dimension $n+1$ in $V \times V = \RR^{2n+2}$. 

By ED duality \cite[\S 5]{DHOST}, our 
problem is equivalent to solving the system of linear equations
\begin{equation}
\label{eq:conormal1}
 f + g = h   \quad \hbox{for} \quad (f,g) \in {\rm Con}_\lambda .
\end{equation}
These $n+1$ inhomogeneous linear equations have finitely many 
complex solutions  $(f^*,g^*)$ on  the $(n+1)$-dimensional affine 
variety ${\rm Con}_\lambda$. Their number is the special ED degree.

We now write the equations (\ref{eq:conormal1}) using 
the parametrization of ${\rm Con}_\lambda$ given in Theorem~\ref{main2}:
\begin{equation}
\label{eq:conormal2}
 h(x,y) \,\,= \,\, (t x - sy)^a \cdot g_1(x,y)\, \,+\,\, (s x + t y)^{n-a+2} \cdot g_2(x,y). 
\end{equation}
The two summand are unknown points in $\Delta_{\{ 1^{n-a},a \} }$
and in $\Delta_{\{ 1^{a-2},n-a+2 \} }$. Both are now regarded as affine
varieties in $V = \CC^{n+1}$. The forms
$g_1$ and $g_2$ are unknown and they have degrees $n-a$ and
$a-2$ respectively. Furthermore, $(s:t)$
is an unknown point in $\PP^1$.
Hence there are $n+1$ unknown parameters in total, to match the
$n+1$ given coefficients of $h(x,y)$. 
Thus (\ref{eq:conormal2})
is a square polynomial system in $\CC^{n+1}$, and we need to count its solutions.

Any representation (\ref{eq:conormal2}) over $\CC$ of the given binary 
form $h(x,y)$ will be called an {\em orthogonal decomposition  of type $a$}.
Thus, our proof reduces to establishing the following assertion:
a general binary form of degree $n$ has precisely
$n$ orthogonal decompositions of type $a$.

In what follows we describe the binary form whose zeros are the points $(s:t)$ that can occur in (\ref{eq:conormal2}). In other words, we eliminate the unknown binary  forms $g_1$ and $g_2$
from (\ref{eq:conormal2}).
Once $(s:t)$ is known, the coefficients of $g_1$ and $g_2$
can be recovered by solving a linear system of equations. So, it suffices to identify that binary
form and to show it has degree $n$. 

We define an endomorphism $L^{(k)}$ of the vector space $V = \RR[x,y]_n$ as follows:
$$ \bigl(L^{(k)} (f)\bigr)(x,y) \,\,= \,\,
 \frac{(n-k)!}{n!}\sum_{i=0}^k (-1)^i \binom{k}{i} x^{k-i} y^i \frac{\partial^k}{\partial x^i \partial y^{k-i}} f(x,y).
$$
This linear differential operator 
generalizes the ${\rm SO}_2$-invariant vector field
$$ L^{(1)} \,\, = \,\, \frac{1}{n} \biggl[
x \frac{\partial}{\partial y} \,- \, y \frac{\partial}{\partial x} \biggr] $$
and the second order operator
$$ L^{(2)} \,\, = \,\, \frac{1}{n(n-1)} \biggl[
x^2 \frac{\partial^2}{\partial y^2} \,- \,2xy 
\frac{\partial^2}{\partial x \partial y}   \,+\,
y^2 \frac{\partial^2}{\partial x^2} \biggr]. $$

The linear map $L^{(k)} : V \rightarrow V$ can be written explicitly
in terms of coordinates as follows.
If $\,f=\sum_{i=0}^n \binom{n}{i} a_i x^i y^{n-i}\,$ then the coefficients of
 $\,L^{(k)}(f) \,\,=\,\, \sum_{j=0}^n \binom{n}{j} b_j x^j y^{n-j}\,$ are
\begin{equation}
\label{eq:binomialsum}
\binom{n}{j}b_j \quad =
\sum_{i=\max\{0,k-j\}}^{n-j}(-1)^i \binom{k}{i}\binom{n-k}{i+j-k}a_{2i+j-k}.
\end{equation}
If we apply the $n$-th order operator then 
 this amounts to a rotation by $90$ degrees:
\begin{equation}
\label{eq:90degrees}
\bigl(L^{(n)}(f)\bigr)(x,y)  \,\,   = \,\, f(-y,x). 
\end{equation}
Now, the relevance of the differential operator $L^{(k)}$ for our proof is as follows:

\begin{lemma} \label{lem:keylemma}
Let $h$ be a  binary form of degree $n$.
Suppose that $(s:t) \in \PP^1$  
occurs in an  orthogonal decomposition (\ref{eq:conormal2}) of type $a$.
Then $(s:t)$ is a root of the binary form $\,L^{(a-1)}(h)$.
\end{lemma}

Lemma \ref{lem:keylemma} will be proved further below. 
We first derive the theorem from the lemma.

The linear map $L^{(a-1)}$ is not zero. Hence, for generic $h$,
the binary form $L^{(a-1)}(h)$ is nonzero and has degree $n$.
Such a binary form has at most $n$ distinct roots. Therefore,
by Lemma \ref{lem:keylemma}, we know that
the special ED degree is at most $n$.
What we must prove is that the special ED degree 
 is at least $n$. We do this by exhibiting, for every $n$ and $a$,
 one particular binary form of degree $n$
that has $n$ distinct orthogonal 
decompositions of type $a$.

We begin with the observation that the
following two binary forms have real coefficients:
\begin{equation}
\label{eq:hnxy}
h_n(x,y)\, \,\,= \,\,\,\frac{1}{2} \bigl( (x+\sqrt{-1}\cdot y)^n \, + \,(x-\sqrt{-1} \cdot y)^n \bigr), \quad 
\end{equation}
$$
k_n(x,y) \,=\, \frac{1}{2 \sqrt{-1}} \bigl(( x+\sqrt{-1}\cdot y)^n \, + \,(x-\sqrt{-1} \cdot y)^n \bigr).
$$
These two forms are invariant under rotation by $2\pi/n$, i.e.~they are fixed by the
endomorphism $\rho : V \rightarrow V$ that maps
$\,x\,$ to $\, (\cos \frac{2\pi}{n}) x - (\sin \frac{2\pi}{n}) y\,$ and
$\,y\,$ to $\, (\sin \frac{2\pi}{n}) x + (\cos \frac{2\pi}{n}) y$.

\smallskip

{\bf Case 1.} Suppose that $n$ is odd. For our special binary form we take

$$ \begin{matrix}
h_n(x,y) \,\,= \,\, x^n - \binom{n}{2}x^{n-2}y^2 + \binom{n}{4}x^{n-4}y^4 - \cdots \pm \binom{n}{n-1}xy^{n-1}.
\end{matrix}
$$
Note that all the exponents of $x$ are odd while that of $y$ are even.
For odd $a$ we have
\begin{equation}
\label{eq:hg1g2}
h \,=\, x^a g_1 + y^{n-a+2} g_2,
\end{equation}
for suitable forms $g_1$ and $g_2$ depending on $a$.
Similarly, for even $a$ we have
$$
h \,=\, y^a g_1 + x^{n-a+2} g_2.
$$
For each decomposition, we obtain $n-1$ others
by acting with the rotations  $\rho, \rho^2,\ldots,\rho^{n-1}$.

  \smallskip

{\bf Case 2.} Suppose that $n$ even and $a$ is even. We also take
$$ \begin{matrix}
h_n(x,y)\,\, = \,\, x^n - \binom{n}{2}x^{n-2}y^2 + \binom{n}{4}x^{n-4}y^4 - \cdots \pm y^n.
\end{matrix}
$$
Then (\ref{eq:hg1g2}) also holds. Now, $h_n$ is fixed by $\rho^{n/2}$,
so applying the rotations $\rho^i$ gives only $n/2$ distinct orthogonal
decompositions of type $a$.  However, we have
$h_n(x,y) = \pm h_n(y,x)$, so by permuting the two variables we
get additional decompositions. These are  not equal to any of the previous ones.
In total, this yields $\frac{n}{2} \cdot 2 = n$ distinct decompositions 
(\ref{eq:conormal2}) for $h_n$.

\smallskip

{\bf Case 3.} Suppose that $n$ even and $a$ is odd. In that case we take
$$ 
\begin{matrix}
k_n(x,y)\,\, = \,\,
 \binom{n}{1}x^{n-1}y - \binom{n}{3}x^{n-3}y^3 + \binom{n}{5}x^{n-5}y^5 - \cdots \pm \binom{n}{n-1}xy^{n-1},
 \end{matrix}
$$
and the argument is the same as in Case 2.
This completes the proof of Theorem \ref{thm:itisn}.
\end{proof}

We now turn to Lemma \ref{lem:keylemma}.
Note that this result is familiar in the case $a=2$,
when (\ref{eq:minimize}) asks for the
best rank $1$ approximation of a symmetric
$2 {\times} 2 {\times} \cdots {\times} 2$
tensor $h$.
Finding that approximation amounts to computing
the {\em eigenvectors} of $h$; see \cite[Corollary 8.7]{DHOST}.
However, by definition, the eigenvectors of $h$ are  the roots of
$\, L^{(1)}(h)  = 
(1/n) \cdot {\rm det} \begin{pmatrix}  x & y \\ \partial h/\partial x \! & \! \partial h /\partial y   \end{pmatrix} $.

\begin{proof}[Proof of Lemma \ref{lem:keylemma}]
We claim that  the operator $L^{(k)}$ satisfies the following  identity
\begin{equation}
\label{eq:importantid}
 L^{(k)}\bigl(f(x,y)\bigr) \,\,=\,\, (-1)^{n-k} \cdot L^{(n-k)} \bigl(f(-y,x)\bigr)
\qquad \hbox{for $k=0,1,\ldots,n$}.
\end{equation}
The special case $k=n$ is  (\ref{eq:90degrees}).
One can show that (\ref{eq:importantid}) holds by a direct computation, using the
formula (\ref{eq:binomialsum}). Equivalently, we check that it holds for monomials
and extend by linearity.

Suppose now that $(s:t) \in \PP^1$  
occurs in an  orthogonal decomposition (\ref{eq:conormal2}) of type $a$.
We apply the  differential operator $L^{(a-1)}$ to both sides of that equation. This implies
$$ L^{(a-1)}\bigl(h(x,y)\bigr) \,= \,
L^{(a-1)}\bigl(\,(tx-sy)^a g_1(x,y) \,\bigr) 
\,+\, L^{(a-1)}\bigl(\,(sx+ty)^{n-a+2} g_2(x,y)\,\bigr).
$$
Applying (\ref{eq:importantid}) to the summand on the right, we conclude
$$ L^{(a-1)}\bigl(h(x,y)\bigr) \,= \,
L^{(a-1)}\bigl(\,(tx-sy)^a g_1(x,y) \,\bigr) 
\,+\, (-1)^{n-a+1}L^{(n-a+1)}\bigl(\,(tx-sy)^{n-a+2} g_2(-y,x)\,\bigr).
$$
In both summands, a $k$-th order differential operator  $L^{(k)}$ 
is applied to a binary form that has $(s:t)$ as a root of multiplicity at least $k+1$.
Both of the resulting
binary forms still have $(s:t)$ among their roots.
 This shows that  $ L^{(a-1)}\bigl(h(x,y)\bigr)$
 vanishes at $(s:t)$.
\end{proof}

The difference between the two ED degrees in Theorem~\ref{thm:itisn}
is $2  (n-a + 1) (a-1)$. We shall explain this number 
and how to think about the operator $L^{(a-1)}$. If we fix
values for the parameters $s$ and $t$,   then the equation (\ref{eq:conormal2})
translates into an inhomogeneous linear system of equations
whose unknowns are the coefficients of $g_1$ and $g_2$.
We have  $n+1$ linear equations in $n=(n-a+1)+(a-1)$ unknowns.
There is  no solution for generic $s$ and $t$.

We write our inhomogeneous linear system as an $(n+1)\times (n+1)$-matrix $\mathcal{M}_{n,a}$.
The first row consists of the coefficients of $h$. The next $n-a+1$ rows
contain the monomials of degree $a$ in $(s,t)$, and the last $a-1$ rows
contain the monomials of degree $n-a+2$ in $(s,t)$. 
We seek row vectors of the form $(-1,g_1,g_2)$ that lie in the left kernel of $\mathcal{M}_{n,a}$.
The matrix has a banded structure, like
Sylvester's matrix for the resultant. For instance,  for $\lambda = (3,1)$,
$$ 
\mathcal{M}_{4,3} \,\, = \,\,
\begin{pmatrix}
  \, h_0  &    h_1  &   h_2   &    h_3  &  h_4  \\
\,  -s^3  & 3 s^2 t &   -3st^2 &   t^3  &   0  \\
\,   0  &  -s^3   & 3s^2t & -3st^2 & t^3  \\
\,  t^3 &  3 s t^2 & 3 s^2 t &     s^3   & 0  \\
\,   0  &     t^3  &  3 s t^2 & 3 s^2t & s^3 
   \end{pmatrix}.   $$
In order for our linear equations to have a solution, the determinant of
$\mathcal{M}_{n,a}$ must be zero.  

The degree of ${\rm det}(\mathcal{M}_{n,a})$ in $(s,t)$ is
precisely the generic ED degree. This is best seen from (\ref{eq:aswritten}).
If we replace  (\ref{eq:innerprod1}) by a generic inner product 
then the rows for $g_1$ change by a linear transformation, 
whilst the rows for $g_2$
change by the inverse linear transformation. 
Thus, for the  generic ED problem,
the  critical points are precisely
the roots of the determinant.

\smallskip

However, for the special ED problem, our determinant 
admits the following factorization:
\begin{equation}\label{eq:parity}
{\rm det} \bigl( \mathcal{M}_{n,a} \bigr) \,\,\, = \,\,\,
\pm \bigl(L^{(a-1)}(h)\bigr)(s,t) \cdot (s^2+t^2)^{(n-a + 1) (a-1)}.
\end{equation}    
The binary form $s^2 + t^2$ vanishes if and only if the last
$n$ rows of $\mathcal{M}_{n,a}$ are linearly dependent.
The degree of the extraneous factor is $ 2  (n-a + 1) (a-1) $,
and subtracting that from the generic ED degree gives $n$.
The remaining factor is the remarkable binary form $L^{(a-1)}(h)$.

We close this section with a conjecture, namely that the
 two ED degrees of $\Delta_\lambda$ always have the same parity.
  According to Table~\ref{tab:eins}, this holds for
   all partitions with $n \leq 7$.
   For hooks, it is proved by Theorem \ref{thm:itisn}, and the underlying reason is 
seen clearly in \eqref{eq:parity}. In general, the extraneous 
components should come from isotropic quadrics like $s^2+t^2$, 
and parallelities like $sv-tu$. These quadrics suggest that
the difference in ED degrees is even for all $\lambda$.
This conjecture is related to \cite[eqn.~(3.5)]{OSS}.
   At present we do not know how prove it.
   
\bigskip \bigskip
   
\section{ED Duality in Action}

We now illustrate how our results can be applied
to find exact solutions to the optimization problem (\ref{eq:minimize}).
Following \cite{DHOST, OSS}, our approach is to compute
all critical points and then select the best real critical point.
By ED duality \cite[\S 5]{DHOST}, the critical points are found
by solving linear equations on the conormal variety.
Given $h$, we need to compute all decompositions 
\begin{equation}
\label{eq:hfg_mantra}
 h(x,y) \,\, = \,\, f(x,y) \,+\, g(x,y)
\qquad {\rm where} \,\,\, (f,g) \in {\rm Con}_\lambda. 
\end{equation}
If $h$ is generic then the number of such decompositions is the special ED degree.
The  binary forms $f $ that arise in the 
decompositions (\ref{eq:hfg_mantra}) are precisely the
critical points on $\Delta_\lambda$
for the Euclidean distance to $h$, and similarly the
forms $g$ are the critical points on its dual $(\Delta_\lambda)^\vee$.
The proximity of the solution is reversed under duality
 because  $\,|| h-f||^2 + ||h-g||^2 =  ||h||^2$.
This follows from $h = f+g$ and  $\langle f,g \rangle = 0$. In particular, 
if $f$ is the closest point to $h$ among those on $\Delta_\lambda$,
then it is paired with the farthest critical point $g$ on $(\Delta_\lambda)^\vee$, and vice versa.

\medskip

In what follows we illustrate how one might solve
(\ref{eq:hfg_mantra}) and hence (\ref{eq:minimize})  in practice.
We begin with $n=5$ and $\lambda = (3,1,1)$. For our given data point we take the binary quintic
\begin{equation}
\label{eq:nicequinticdata}
 h(x,y) \,\, = \,\,
\frac{1}{2} \bigl(x+\sqrt{-1}\cdot y \bigr)^5 \, + \,\frac{1}{2} \bigl(x-\sqrt{-1} \cdot y \bigr)^5  \, + \, y^5 
\,\,= \, \, x^5 - 10 x^3 y^2  + 5 x y^4 + y^5. 
\end{equation}
This is a slight variant of (\ref{eq:hnxy}).
The primal problem is to find the closest quintic $f \in \Delta_{(3,1,1)}$
with a triple root, and the dual problem is to find the closest quintic
$g \in \Delta_{(4,1)}$ with a quadruple root.
By Theorem \ref{thm:itisn}, the equation (\ref{eq:hfg_mantra}) has
five solutions on ${\rm Con}_\lambda$. They are

$$
\begin{small}
\begin{array}{|c|c||c|c|}
\hline
|| g ||^2 & f  = h-g & g  = h-f & ||f||^2 \\
\hline
3.02 & (1.471 x^2-5.582xy+3.585y^2)(x+0.785y)^3 &
(-1.238x-0.735y)(0.785x - y)^4 & 13.98\\
4.92 & (-0.263x^2-0.211 xy -0.338 y^2) (x-3.132 y)^3 & 
(0.0131 x - 0.403 y)(3.132x+y)^4 & 12.08 \\
5.09 & \! (-0.263x^2+0.167xy+0.0346 y^2)(x+3.020y)^3  \! & 
(0.0152 x+0.468 y) (3.020x-y)^4 & 11.91 \\
6 & (x^2-10y^2) x^3 &  (5x+y) y^4 & 11 \\
8.12 & (1.473x^2 + 5.397xy +2.867y^2)(x-0.6732y)^3 & \!
(-2.301 x + 1.875 y)(0.6732x + y)^4 \! \! & 8.88 \\
\hline
\end{array}
\end{small}
$$
The upper left quintic $f$ in $\Delta_{(3,1,1)}$
is closest to $h$, at distance
$3.0215666805997121633^{1/2}$,
with triple root $(-0.78519451639408253233:1)$.
The lower right quintic $g$
in  $\Delta_{(4,1)}$ is closest to $h$, at distance 
$8.8808277614588859783^{1/2}$, with quadruple root
$(-1:0.67321557299682647408)$.
We could have guessed the decomposition $h=f+g$
in the fourth row from the input (\ref{eq:nicequinticdata}).
This one is indeed a critical point, but it is neither
primal optimal nor dual optimal.

\smallskip

Our five solutions to (\ref{eq:hfg_mantra})
were found by using the matrix that was introduced in Section~5:
$$ \mathcal{M}_{5,3}(s,t) \,\,=\,\,
\begin{pmatrix}
1  & 5 & 0 & -10 & 0 & 1 \\
-s^3 & 3 s^2 t & -3 s t^2 & t^3 & 0 & 0 \\
0 & -s^3 & 3 s^2 t & -3 s t^2 & t^3 & 0 \\
0 & 0 & -s^3 & 3 s^2 t & -3 s t^2 & t^3 \\
t^4 & 4 s t^3 & 6 s^2 t^2  & 4 s^3 t & s^4 & 0 \\
0 & t^4 & 4 s t^3 & 6 s^2 t^2  & 4 s^3 t & s^4
\end{pmatrix}.
$$
The triple (resp.~quadruple) roots $(s:t) \in \PP^1$ of $f$ (resp.~$g$)
 are the roots of the binary form
$$ {\rm det}\bigl(\mathcal{M}_{5,3}(s,t) \bigr) \,\,=\,\,
(s^2+t^2)^6 \cdot (s^5 - 10 s^3 t^2 - s^2 t^3 + 5 s t^4) \,\,=\,\,
-(s^2+t^2)^6 \cdot  \bigl(L^{(2)}(h)\bigr)(s,t). $$
We compute the five real roots numerically, and at each of them
we compute the left kernel of $\mathcal{M}_{5,3}(s,t)$. The result of that computation
is precisely the list of five pairs $(f,g)$  above.

This method scales well for hooks $\lambda = \{1^{n-a},a\}$.
Here, the special ED degree is always $n$,
and Lemma~\ref{lem:keylemma} furnishes the
minimal polynomial for the desired $a$-fold root of $f \in \Delta_\lambda$.
The matrix $\mathcal{M}_{n,a}(s,t)$ represents our task
 as a {\em homogeneous polynomial eigenvalue problem},
 and this makes it amenable to well-developed
 methods of numerical linear algebra; see e.g.~\cite{DT}.

For an illustration we fix $n=15$ and $a=6$.
The data point is the binary form
$$ \begin{matrix}
h(x,y) \,=\, \sum_{i=0}^{15} \binom{15}{i} u_i x^i y^{15-i} ,
\qquad \hbox{with randomly chosen coefficient vector} \qquad \qquad 
\end{matrix}
$$
$$ \qquad \qquad 
 (u_0, \dots, u_{15}) \,\,=\,\, (20,-17,3,16,12,14,-16,-5,7,8,-13,5,-13,-16,7,-11).
$$
Among forms $f$ of degree $15$ with a root of multiplicity $6$,
the following is the closest to $h$:
$$
\begin{small}
\begin{matrix}
 \sum_{i=0}^{15}\binom{15}{i}v_i x^i y^{15-i} = 
10^{-6}(x-8.70886y)^6(131903x^9{+}11375.9x^8y-\cdots - 552.901xy^8 {+}45.8419y^9) , 
\smallskip \\
(v_0,v_1,\dots,v_{15}) = (20,-17,3,16,12,14,-16,-5,7.04,8.19,-12.17,8.09,-3.78,1.42,-0.46,0.13).
\end{matrix}
\end{small}
$$
All $15$ critical points for this optimization problem are real
because $L^{(5)}(h)$ is real-rooted for our $h$.
 Its $15$ roots have the form
$(s:1)$ where $s \in \RR$. They appear in the first column~of 
$$
\begin{array}{|c|c||c|c|}
\hline
\text{root $s$ of} \,\, L^{(5)}(h) & \text{distance${}^2$} & \text{local}  \\
\hline
8.70886 & 86791 & \min \\
3.70567 & 111796 & \min \\
2.19850 & 163470 &  \max \\
0.05736 & 476068 & ? \\
-0.38870 & 550056 & ? \\
-3.49092 & 564363 & \min \\
-5.71229 & 565936 & \min \\
-0.22118 & 657621 & ? \\
1.25359 & 723240 & \max\\
0.25811 & 727831 &  ?\\
0.48187 & 774941 &  ?\\
0.80694 & 934884 &  \max\\
-0.68808 & 1058800 &  \max\\
-1.67383 & 1150260 &  \max\\
-1.06515 & 1256200 & \max\\
\hline
\end{array}
$$
By computing the left kernel of $\mathcal{M}_{15,6}(s,t)$ at each root,
we find the $15$ decompositions (\ref{eq:hfg_mantra}).
The squared norms $||g||^2$ of the dual solutions $g = h-f$
are listed in the second column.
So, the first row gives the optimal solution for 
$\lambda = \{1^{9}, 6\}$, while the last row gives
the optimal solution for the dual problem
$\mu = \{1^{4}, 11\}$. Local optima are indicated
in the third column. There are four local optima
on $\Delta_\lambda$ (marked with ``min'')
and six local optima on $\Delta_\mu$ (marked with ``max'').
These were certified using the signature of the Hessian of the distance function.
The local nature of the other five points cannot be decided 
with the second-order criterion
because their Hessians are singular, in both the primal
and the dual formulation.

\smallskip

Our optimization problem is more challenging
when $\lambda$ is not a hook. A small
interesting case is the partition $\lambda = (3,2)$.
Let us see what happens here if we take the same input $h$ as in
(\ref{eq:nicequinticdata}).
We seek a binary form $f \in \Delta_{(3,2)}$
with a triple root and a double root that is closest to $h$.
The desired decomposition (\ref{eq:hfg_mantra})
on the conormal variety ${\rm Con}_{(3,2)}$
now takes the form
$$ h(x,y) \,\, = \,\,
\alpha (tx - sy)^3 (vx - uy)^2 \,\,+\,\,  
(\beta x + \gamma y) (sx + ty)^4 \,+\,
\delta (ux + vy)^5. 
$$
This means that  the vector $(-1,\alpha,\beta,\gamma,\delta)$
lies in the left kernel of the $5 \times 6$-matrix
$$ 
\begin{small}
\begin{pmatrix} 
1  & 5 & 0 & -10 & 0 & 1 \\
-s^3u^2 & \! 2s^3uv{+}3s^2tu^2 \! &
\! -6s^2tuv{-}s^3v^2{-}3st^2u^2 \! &
\! 3s^2tv^2{+}6st^2uv{+}t^3u^2 \! & 
\! -3st^2v^2{-}2t^3uv \! & t^3v^2 \\
0 & t^4 & 4 st^3 & 6s^2t^2 & 4s^3t & s^4 \\
t^4 & 4 s t^3 & 6 s^2 t^2 & 4 s^3t & s^4 & 0 \\
v^5 & 5 uv^4 & 10 u^2 v^3 & 10 u^3 v^2 & 5 u^4 v & u^5 
\end{pmatrix}.
\end{small}
$$
 All $5 \times 5$ minors of this matrix must be $0$.
However, the ideal of $5 \times 5$ minors has some
extraneous associated primes that must be removed. This is analogous
to the factor $s^2+t^2$ in the hook case, but more complicated,
so we do not pursue that primary decomposition.

Instead we simply work directly with the  squared
Euclidean distance function
$$
D \,\,=\,\, \parallel h-\alpha(tx-sy)^3(vx-uy)^2 \parallel^2 , 
$$
and we  solve the following system of polynomial equations in five unknowns:
$$
\frac{\partial D}{\partial \alpha} \,= 
\frac{\partial D}{\partial s} = \frac{\partial D}{\partial t} \,=\,
\frac{\partial D}{\partial u} = \, \frac{\partial D}{\partial v}\,=\, 0 
\qquad \hbox{and} \quad
sv - ut \neq 0. $$
This has $20$ complex solutions, while
 the ED degree of $\Delta_{(3,2)}$ is $21$.
 One checks that  $h$ lies in the ED discriminant 
\cite[\S 7]{DHOST}. Only $4$ of the $20$ solutions are real.
Setting $t=v=1$, they are 
$$
\begin{array}{|c|c|c||c|}
\hline
\alpha & s & u & \text{distance${}^2$}\\
\hline
\phantom{-} 1.817238 & \phantom{-} 0.673272 & -1.316853 & 7.724678\\
-0.266252 & -3.020572 & \phantom{-} 0.274673 & 8.643701\\ 
-0.265424 & \phantom{-} 3.131909 & -0.387044 & 12.017703\\
\phantom{-} 1.815280 & -0.785143 & \phantom{-} 1.428712 & 13.105926\\
\hline
\end{array}
$$
The worst critical point is given in the bottom row. The corresponding quintic is
$$
f \,\,=\,\, 1.81528x^5-0.911262x^4y-5.15521x^3y^2+0.0137459x^2y^3+4.34203xy^4+1.79341y^5.
$$
ED duality gives us
the optimal solution $g = h-f$ for the dual problem on $(\Delta_{(3,2)})^{\vee}$, namely
$$ \begin{matrix}
g & \!\! = \!\! & \!\! -0.81528x^5+0.911262x^4y-4.84479x^3y^2-0.0137459x^2y^3+0.657973xy^4-0.793414y^5 \\
& \! =\! &  (\beta x+ \gamma y)(-0.785143x+y)^4 + \delta(1.428712x+y)^5 
\end{matrix}
$$
for suitable real constants $\beta, \gamma$, and $\delta$, which can be found by solving a linear system.

\smallskip

The form $g$ is very interesting for the application described in Section 4.
Recall that the generic tensor rank for binary quintics is $3$. The unique rank $3$
decomposition of the given quintic $h$ is complex. It is shown on the
left in (\ref{eq:nicequinticdata}). One might therefore ask for a
best approximation to $h$ that has real rank $3$. This question is
not well-posed because the set $\mathcal{R}_5$ from Section 4 is not closed.
Instead one should ask for the closest binary quintic 
in the closure of $\mathcal{R}_5$, i.e.~among those whose
{\em border rank} equals $3$. That optimal quintic must be our $g$,
because it is a critical point on
real rank boundary $\partial_{\rm alg}(\mathcal{R}_5)$, 
which equals the 
little apple hypersurface $(\Delta_{(3,2)})^\vee$.
The fact that $g$ has border rank $3$ is verified by the representation
$$ g \, = \,
\lim_{\epsilon \to 0}\left(\frac{1}{5\epsilon}((-0.785143+\epsilon \beta)x+(1+\epsilon \gamma)y)^5 - \frac{1}{5\epsilon}(-0.785143x+y)^5+\delta(1.428712x+y)^5 \! \right)\!. 
$$
This computation offers a concrete
illustration of Theorem \ref{thm:realrank} and  Example \ref{ex:apple}.

\medskip \bigskip \bigskip

\noindent
{\bf Acknowledgements.}\smallskip \\
We thank Michael Burr for a conversation in March 2015
that ignited our interest in the ED problem for multiple root loci.
We are grateful to Greg Blekherman for his help in getting
Theorem \ref{thm:realrank} into final shape.
This project was carried out in the summer of 2015, when  both authors visited the
National Institute of Mathematical Sciences,
Daejeon, Korea. 
Hwangrae Lee was supported by the National Research Foundation of Korea (MSIP 2011-0030044).
Bernd Sturmfels was supported by  the US National Science Foundation (DMS-1419018).

\begin{small}

\end{small}

\bigskip

\noindent
\footnotesize {\bf Authors' addresses:}

\noindent Hwangrae Lee, 
Pohang University of Science and Technology, Korea,
 {\tt meso@postech.ac.kr}

\noindent Bernd Sturmfels,
University of California, Berkeley, USA,
{\tt bernd@berkeley.edu}

\end{document}